\def\R{\mathbb R}
\def\M{\mathsf M}
\def\Mw{\mathsf M _w}
\def\Mmu{\mathsf M _w}
\def\C{\mathsf C}
\def\Cmu{\mathsf C _w}
\def\Cw{\mathsf C^{w}}
\def\norm#1.#2.{\lVert#1\rVert_{#2}}
\def\Norm#1.#2.{\bigl\lVert#1\bigr\rVert_{#2}}
\def\NOrm#1.#2.{\Bigl\lVert#1\Bigr\rVert_{#2}}
\def\NORm#1.#2.{\biggl\lVert#1\biggr\rVert_{#2}}
\def\NORM#1.#2.{\Biggl\lVert#1\Biggr\rVert_{#2}}
\def\ip#1,#2,{\langle #1,#2\rangle}
\def\Ip#1,#2,{\bigl\langle#1,#2\bigr\rangle}
\def\IP#1,#2,{\Bigl\langle#1,#2\Bigr\rangle}
\def\abs#1{\lvert#1\rvert}
\def\Abs#1{\bigl\lvert#1\bigr\rvert}
\theoremstyle{plain}
\newtheorem{theorem}{Theorem}[section]
\newtheorem{corollary}[theorem]{Corollary}
\newtheorem{lemma}[theorem]{Lemma}
\newtheorem{proposition}[theorem]{Proposition}
\theoremstyle{definition}
\newtheorem{definition}[theorem]{Definition}
\newtheorem{remark}[theorem]{Remark}
\numberwithin{equation}{section}
\title[Solyanik Estimates for the Hardy-Littlewood Maximal Operator]{Weighted Solyanik Estimates for the Hardy-Littlewood maximal operator and embedding of $A_\infty$ into $A_p$}
\author{Paul Hagelstein}
\address{Department of Mathematics, Baylor University, Waco, Texas 76798}
\email{\href{mailto:paul_hagelstein@baylor.edu}{paul\!\hspace{.018in}\_\,hagelstein@baylor.edu}}
\thanks{P. H. is partially supported by a grant from the Simons Foundation (\#208831 to Paul Hagelstein).}
\author{Ioannis Parissis}
\address{Department of Mathematics, Aalto University, P. O. Box 11100, FI-00076 Aalto, Finland}
\email{\href{mailto:ioannis.parissis@gmail.com}{ioannis.parissis@gmail.com}}
\thanks{I. P. is supported by the Academy of Finland, grant 138738.}
\subjclass[2010]{Primary 42B25, Secondary: 42B35}
\keywords{Halo function, Muckenhoupt weights, doubling measure, maximal function, Tauberian conditions}
\begin{document}
	
\begin{abstract}
Let $w$ denote a weight in  $\mathbb{R}^n$ which belongs to the Muckenhoupt class $A_\infty$ and let $\Mmu$ denote the uncentered Hardy-Littlewood maximal operator defined with respect to the measure $w(x)dx$. The \emph{sharp Tauberian constant} of $\Mmu$ with respect to $\alpha$, denoted by $\Cmu(\alpha)$, is defined by
\[
\Cmu(\alpha) \coloneqq \sup_{E:\, 0 < w(E) < \infty}w(E)^{-1}w\big(\big\{x \in \mathbb{R}^n:\, \Mmu \chi_E (x) > \alpha\big\}\big).
\]
In this paper, we show that the Solyanik  estimate
\[
\lim_{\alpha \rightarrow 1^-}\Cmu(\alpha) = 1
\]
holds. Following the classical theme of weighted norm inequalities we also consider the sharp Tauberian constants defined with respect to the  usual uncentered Hardy-Littlewood maximal operator $\M$ and a weight	 $w$:
\[
\Cw(\alpha) \coloneqq \sup_{E:\, 0 < w(E) < \infty}w(E)^{-1} w\big(\big\{x \in \R^n:\, \M \chi_E (x) > \alpha\big\}\big).
\]
 We show that we have $\lim_{\alpha\to 1^{-}}\Cw(\alpha)=1$ if and only if $w\in A_\infty$. As a corollary of our  methods we obtain a quantitative embedding of $A_\infty$ into $A_p$.
\end{abstract}

\maketitle
\section{Introduction}
We are interested in sharp asymptotic estimates for the distribution set of the Hardy-Littlewood maximal operator defined with respect to a measure $\mu$. We immediately restrict our attention to measures $d\mu(x)=w(x)dx$ for some appropriate locally integrable weight $w$ in $\R^n$ and consider the uncentered Hardy-Littlewood maximal operator defined with respect to $w$:
\[
 \Mw  f(x)\coloneqq \sup_{x\in Q} \frac{1}{w(Q)}\int_Q |f(y)|w(y)dy,\quad  x\in \R^n.
\]
Here the supremum is taken over all cubes in $\R^n$ whose sides are parallel to the axes that contain the point $x$. We study in particular the asymptotic behavior of the \emph{sharp Tauberian constant}
\[
 \Cmu(\alpha)\coloneqq \sup_{E:\, 0<w(E)<\infty} w(E)^{-1} w(\{x\in\R^n:\, \Mmu \chi_E (x)>\alpha\})
\]
as $\alpha\to 1^-$. In the case that $w\equiv 1$ is the Lebesgue measure in $\R^n$ we drop the subscript $w$ and use the notation $\M$ and $\C$ for the Hardy-Littlewood maximal operator and its sharp Tauberian constant, respectively. Our main objective is to investigate whether
\begin{equation}\label{e.main}
 \lim_{\alpha\to 1^-} \Cmu(\alpha)=1.
\end{equation}
For the $n$-dimensional Lebesgue measure Solyanik showed in \cite{Solyanik} that we have $\C(\alpha)-1\eqsim_n (1-\alpha)^\frac{1}{n}$ when $\alpha\to 1^-$ and thus \eqref{e.main} holds. Solyanik also showed corresponding estimates for the centered Hardy-Littlewood maximal operator as well as for the strong maximal operator while in \cite{HP} a similar estimate for the Hardy-Littlewood maximal operator with respect to Euclidean balls is proved. Note that estimate \eqref{e.main} is a certain continuity assertion at $\alpha=1$. Continuity properties of $\C(\alpha)$ for $\alpha<1$ are studied in \cite{BH}. We loosely refer to an estimate of the type \eqref{e.main} as a \emph{Solyanik estimate}.

The reader might appreciate a few words regarding the terminology  ``Tauberian constant'' in this context.   In \cite{cf77}, A. C\'ordoba and R. Fefferman introduced a model multiplier operator $T_\theta$ and corresponding maximal operator $\mathsf M_\theta$ (the precise definitions of which do not concern us here.)   They proved that if  $\mathsf M_\theta$ is bounded on $L^{(p/2)'}(\mathbb{R}^2)$ then $T_\theta$ is bounded on $L^p(\mathbb{R}^2)$ for $1 < p < \infty$.   Conversely they showed that if $T_\theta$ is bounded on $L^p(\mathbb{R}^2)$ and $\mathsf M_\theta$ satisfies the additional  assumption  $|\{ \mathsf M_{\theta} \chi_E  > 1/2\}| \lesssim |E|$ for all measurable $E\subseteq\R^2$, that they referred to as a Tauberian condition, then $\mathsf M_\theta$ is of weak type $((p/2)', (p/2)')$.   This terminology was quite appropriate as, aptly worded by Rudin in \cite{Rud}, ``Tauberian theorems are often converses of fairly obvious results, but usually these converses depend on some additional assumption, called a \emph{tauberian condition}.''   In \cite{cf77}, the additional assumption was of course $|\{ \mathsf M_{\theta} \chi_E> 1/2 \}|\lesssim |E|$.   In \cite{hs}, Hagelstein and Stokolos investigated the growth of $\sup_E  |E|^{-1}|\{\mathsf M_{\mathcal{B} } \chi_E    > \alpha \}|$ as $\alpha \rightarrow 0$ for a very general class of maximal operators $\mathsf M_\mathcal{B}$.    Reflecting the influence of C\'ordoba and Fefferman's paper,  they said that if  $\sup_E |E|^{-1}\{ M_{\mathcal{B} } \chi_E  > \alpha \}|$ were finite,  then $\mathsf M_\mathcal{B}$ satisfied a Tauberian condition with respect to $\alpha$.    The phrase ``Tauberian condition'' was used in a similar manner in \cite{hlp}, but the expression ``Tauberian constant of $M_\mathcal{B}$ with respect to $\alpha$'', referring to the value of the supremum above, was first used in \cite{HP}.

The study of Solyanik estimates is motivated by a variety of classical and modern themes in harmonic analysis. Indeed, although the term \emph{sharp Tauberian constant} is relatively new, the notion is quite standard: for $\lambda\in(1,\infty)$ the function $\phi(\lambda)\coloneqq \C(1/\lambda)$ is known as the \emph{Halo function} of the collection of sets used to define the maximal operator $\M$. Here we use cubes but all these definitions make sense for more general collections of sets. For this point of view and the connection to differentiation properties of bases see for example \cite{Guz}. On the other hand, a Solyanik estimate for the \emph{strong maximal function} has been used in \cite{CLMP} in order to prove a version of Journ\'e's lemma with \emph{small enlargement}. It is actually not hard to see that, in a quite general context, a Solyanik estimate is closely related to a \emph{quantitative C\'ordoba-Fefferman covering lemma}; see \cite{CF}. From this point of view, a Solyanik estimate involves geometric rather than analytical properties of the differentiation basis which is implicit in the definition of $\M$. It comes then as a surprise, as this paper will establish, that Solyanik estimates are also intimately related to the study of $A_\infty$-weights; in particular we will see in \S\ref{s.t2} how the study of Solyanik estimates and sharp Tauberian constants provides a systematic approach for the study of embeddings of $A_\infty$ into $A_p$.

\subsection{Solyanik estimates with respect to \texorpdfstring{$A_\infty$}{Ainf} weights}  The purpose of this paper is the investigation of \eqref{e.main} and its variations under the presence of a weight $w$ in $\R^n$. With more precise definitions and details to follow our first main result is the following:

\begin{theorem}\label{t.Soldoubling} Let $w\in A_\infty$ be a Muckenhoupt weight. Then
	\[
	\Cmu(\alpha)-1 \leq C_{w,n} (1-\alpha )^{c_{w,n}}
	\]
as $\alpha\to 1^-$, where the constants $C_{w,n},c_{w,n}>0$ depend only upon $n$ and $w$.
\end{theorem}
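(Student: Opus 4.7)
The strategy is to adapt Solyanik's original argument for the unweighted maximal operator to the weighted setting. The key properties used are (i) doubling of $w$, which follows from $w\in A_\infty$, and (ii) the quantitative $A_\infty$ bound $w(S)/w(Q)\leq C(|S|/|Q|)^\delta$ for every cube $Q$ and every measurable $S\subseteq Q$.

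Fix $E$ with $0<w(E)<\infty$ and set $U_\alpha:=\{\Mw\chi_E>\alpha\}$. By the weighted Lebesgue differentiation theorem $E\subseteq U_\alpha$ modulo a $w$-null set, so it suffices to prove $w(U_\alpha\setminus E)\leq C_{w,n}(1-\alpha)^{c_{w,n}}w(E)$. For each $x\in U_\alpha\setminus E$ select a cube $Q_x\ni x$ with $w(Q_x\cap E)>\alpha w(Q_x)$, equivalently $w(Q_x\setminus E)<(1-\alpha)w(Q_x)$; importantly $Q_x\subseteq U_\alpha$, because every $y\in Q_x$ has $Q_x$ itself as a witness. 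A Vitali-type extraction using the doubling of $w$ produces a disjoint subfamily $\{Q_j\}$ with $U_\alpha\setminus E\subseteq\bigcup_j K Q_j$ for some constant $K=K_w$, and the density-plus-disjointness condition yields $\sum_j w(Q_j)\leq\alpha^{-1}w(E)$.

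The naive bound $w(U_\alpha\setminus E)\leq C_{\mathrm{doub}}\sum_j w(Q_j)\lesssim w(E)$ gives only boundedness, not Solyanik decay, because the annular regions $KQ_j\setminus Q_j$ carry $w$-mass comparable to $w(Q_j)$. The Solyanik-type gain is obtained by splitting
\[
w(U_\alpha\setminus E)=w\bigl((U_\alpha\setminus E)\cap\textstyle\bigcup_j Q_j\bigr)+w\bigl((U_\alpha\setminus E)\setminus\textstyle\bigcup_j Q_j\bigr).
\]
The first term is controlled directly by $\sum_j w(Q_j\setminus E)<(1-\alpha)\sum_j w(Q_j)\leq(1-\alpha)\alpha^{-1}w(E)$. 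The second term lies in $\bigcup_j(KQ_j\setminus Q_j)$, and points there belong to $U_\alpha\setminus E$, so they admit their own witnessing cubes of $w$-density exceeding $\alpha$; one then iterates the same selection procedure inside the annular regions, invoking the $A_\infty$ inequality to ensure a definite factor of reduction in $w$-mass at each step. Summing the resulting geometric series gives the stated bound, with $c_{w,n}$ depending on $n$, the $A_\infty$ exponent $\delta$, and the doubling constant of $w$.

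I expect the main obstacle to be precisely this iteration---specifically, verifying a definite contraction of the remaining $w$-mass per step. This is where $A_\infty$, rather than merely doubling, is essential: the inequality $w(S)/w(Q)\leq C(|S|/|Q|)^\delta$ transfers the geometric smallness of each excess $Q_j\setminus E$ (quantified via cube-scale considerations analogous to Solyanik's $(1-\alpha)^{1/n}$ argument in the Lebesgue case) into a $w$-measure estimate on the annular contribution, furnishing the geometric decay that drives the iteration and ultimately produces the power $c_{w,n}$ in the final bound.
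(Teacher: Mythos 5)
Your proposal has the right ingredients (the quantitative $A_\infty$ growth estimate and a covering argument), but the specific covering scheme you chose---Vitali extraction followed by iterating on the annular regions---has a genuine gap that I don't believe can be closed along the lines you sketch.

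The problem is the size of the annular contribution. A Vitali-type extraction produces a \emph{fixed} geometric dilation factor $K$ (say $K=3$, or $K=5$, or some $K=K_w$), and the annuli $KQ_j\setminus Q_j$ have $w$-mass \emph{comparable} to $w(Q_j)$, not smaller: by doubling, $w(KQ_j\setminus Q_j)\lesssim_w w(Q_j)$ but also $w(KQ_j\setminus Q_j)\gtrsim_w w(Q_j)$ in general. So after the split you propose, the second term is bounded only by $\sum_j w(KQ_j\setminus Q_j)\lesssim_w\sum_j w(Q_j)\lesssim_w\alpha^{-1}w(E)$, which is of the \emph{same order} as what you started with and carries no factor of $(1-\alpha)$. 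Iterating the same selection inside the annular regions reproduces an estimate of the same order, so there is no contraction factor to drive the geometric series, and no mechanism for the claimed ``definite factor of reduction in $w$-mass at each step.'' You anticipate this objection yourself, but the $A_\infty$ inequality $w(S)/w(Q)\leq C(|S|/|Q|)^\delta$ cannot rescue it here: to exploit that inequality you would need the Lebesgue measure of the leftover set inside each cube to be small, which is exactly the control the fixed-$K$ Vitali step destroys.

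The paper's proof sidesteps this by \emph{not} using a fixed dilation. Instead it runs a C\'ordoba--Fefferman greedy selection at level $\xi\in(0,1)$ (Lemma~\ref{l.covering}): keep a cube only if its new part has $w$-mass at least $\xi w(Q)$. This buys two things simultaneously. First, the unselected cubes are absorbed into $(1+C_{w,n}\xi^{c_{w,n}})$-dilates of the selected ones---a dilation factor tending to $1$ as $\xi\to 0$---and the $w$-measure of these thin shells is controlled by the asymptotic enlargement estimate of Lemma~\ref{l.collect} (which itself combines satellite configurations, a Lebesgue-measure incremental covering identity, Vitali, and the $A_\infty$ growth bound). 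Second, the disjoint increments $\tilde E_j=\tilde Q_j\setminus\cup_{k<j}\tilde Q_k$ carry at least a $\xi$-fraction of $w(\tilde Q_j)$, so the density $>\alpha$ of each $\tilde Q_j$ in $E$ upgrades to density $>1-(1-\alpha)/\xi$ of each $\tilde E_j$ in $E$, giving $w(\cup\tilde Q_j)\leq\frac{\xi}{\xi-(1-\alpha)}w(E)$ in one shot. Optimizing $\xi=(1-\alpha)^{1/(c_{w,n}+1)}$ then balances the two errors without any iteration. If you want to repair your argument, replace the Vitali step by this greedy $\xi$-selection; the rest of your plan (differentiation to reduce to $w(U_\alpha\setminus E)$, the density estimate on disjoint pieces) survives intact and the iteration becomes unnecessary.
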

Some remarks are in order.  Firstly, we note that the estimate of the theorem can be reversed in the sense that
\[
\Cmu(\alpha)-1\gtrsim_{w,n} (1-\alpha)^{\tilde c_{w,n}}
\]
for some constant $\tilde c_{w,n}$ depending on $w$ and the dimension. Thus the estimate of the theorem is of the correct form. We do not however pursue the best possible exponent $c_{w,n}$ in the Solyanik estimate above. In fact, already in the case of the Lebesgue measure and the uncentered Hardy-Littlewood maximal operator defined with respect to Euclidean balls, the largest possible value of $c>0$ such that $\C(\alpha)\lesssim_n (1-\alpha)^c$ remains unknown. See \cite{HP} for more details on this issue.

It is natural to consider the estimate stated in Theorem~\ref{t.Soldoubling} for more general measures $\mu$ in the place of $w(x)dx$. That is, given a positive Borel measure $\mu$ in $\R^n$ we can define the maximal operator  $\M_\mu$
\[
 \M_\mu f(x)\coloneqq \sup_{x\in Q}\frac{1}{\mu(Q)} \int_Q |f(y)|d\mu(y);
\]
one then defines $C_\mu$ in the obvious way and asks whether the analogue of \eqref{e.main} is true:
\begin{equation}\label{e.muSol}
\lim_{\alpha\to 1^-} C_\mu(\alpha)=1.
\end{equation}
For example it is easy to see that we have $\C_\mu(\alpha)-1\leq 2\alpha^{-1}(1-\alpha)$ for any locally finite Borel measure $\mu$ on the real line. Using this one trivially gets that \eqref{e.muSol} remains valid whenever $\mu$ is a tensor product of arbitrary locally finite one-dimensional Borel measures. However, \emph{some} restriction will have to be imposed on $\mu$ in order to guarantee the validity of \eqref{e.muSol} in general. This follows easily by considering, for example, a countable collection of cubes all of which contain the origin and such that for every $j\in\{1,2,\ldots\}$ there exists $x_j\in Q_j$ but $x_j \notin Q_k$ for $k\neq j$. If $c_j$ is a sequence of positive numbers such that $\sum_j c_j =+\infty$ but $c_j\to 0$ as $j\to +\infty$ then one easily checks that the operator $\Mmu$ defined with respect to $\mu =\delta_0+\sum_j c_j \delta_{x_j}$ does not satisfy any estimate of the form \eqref{e.main}. In particular $\M_\mu$ is unbounded on all $L^p(\mu)$ for $p<\infty$.

Note that in the case of a doubling measure $\mu$ we know {\it{a priori}} that $M_\mu$ is of weak type $(1,1)$ and thus bounded on $L^p(\mu)$ for all $p\in(1,\infty)$. Furthermore, for any locally finite Borel measure $\mu$  the \emph{centered} Hardy-Littlewood maximal operator $\M_{\mu} ^{\mathsf c}$, defined with respect to $\mu$, is easily seen to satisfy a Solyanik estimate \emph{independently of the measure $\mu$}. Since $\mu$ is doubling the operators $\M_\mu$ and $\M_{\mu} ^{\mathsf c}$ are pointwise equivalent. However, no Solyanik estimate can be deduced from these simple facts. One of the reasons behind this, highlighting the subtlety of the problem, is that Solyanik estimates are very sensitive to constants. Thus there exist examples of pairs of maximal operators which are pointwise comparable, but where one of them satisfies a Solyanik estimate while the other one does not. The examples of this sort that we are aware of exist on the level of the Lebesgue measure and maximal operators $\M_{\mathfrak B}$ defined with respect to rather exotic collections of sets $\mathfrak B$; see \cites{BH,HP} for a related discussion. We are not aware of such examples in the context of the basis of axes-parallel cubes in $\R^n$ and thus it is tempting to conjecture that \eqref{e.muSol} remains valid for $M_\mu$ whenever $\mu$ is a \emph{doubling measure}. However, our current methods do not give a definite answer to this issue which we plan to investigate in a future work.

\subsection{Weighted Solyanik estimates} In this paper we also study a closely related question to the one above, inspired by the rich theory of weighted norm inequalities. We thus consider a non-negative, locally integrable function $w$ on $\R^n$, that is, a weight, and define the \emph{weighted sharp Tauberian constant}
\[
 \Cw(\alpha)\coloneqq \sup_{E:\, 0<w(E)<+\infty} w(E)^{-1} w(\{x\in\R^n:\, \M \chi_E (x)>\alpha\}).
\]
Note here that, in contrast to the definition of $C_{w}(\alpha)$, the operator $\M$ is the Hardy-Littlewood maximal operator \emph{defined with respect to the Lebesgue measure in $\R^n$}. A \emph{weighted Solyanik estimate} is now a statement of the form
\begin{equation}\label{e.main2}
\lim_{\alpha\to 1^-} \Cw(\alpha)=1.
\end{equation}
An immediate consequence of \eqref{e.main2} is that $\M$ satisfies a \emph{weighted Tauberian condition}: there exists \emph{some} $\alpha\in(0,1)$ such that $\Cw(\alpha)<+\infty$. Although apparently weak, a weighted Tauberian condition already encodes the boundedness of $\M$ on $L^p(w)$ for sufficiently large $p$, and thus restricts $w$ to some $A_p$ class of weights. More precisely, as was shown in \cite{hlp}, we have
\[
 A_\infty= \cup_{p\geq 1} A_p = \{w: \Cw(\alpha)<+\infty\quad\text{for some}\quad \alpha\in(0,1)\}.
\]
This immediately tells us that a necessary condition for \eqref{e.main2} is that $w\in A_\infty$. In fact, the converse implication is also true:
\begin{theorem}\label{t.main2} Let $w\in A_\infty$ be a Muckenhoupt weight in $\R^n$. Then
\[
 \Cw(\alpha)-1 \leq C_n \Delta_w ^2 (1-\alpha)^{(c_n[w]_{A_\infty})^{-1}}\quad\text{whenever}\quad 1>\alpha>1-e^{-c_n[w]_{A_\infty}},
\]
where $C_n,c_n>1$ are dimensional constants, and $\Delta_w$ is the doubling constant of $w$. In one dimension we have the same estimate for $\Cw(\alpha)-1$ without the term $\Delta_w ^2$.

Furthermore this estimate is optimal up to the term $\Delta_w ^2$: if $w$ is a non-negative, locally integrable function on $\R^n$ such that
\[
 \Cw(\alpha)-1 \leq B (1-\alpha)^\frac{1}{\beta}\quad\text{whenever}\quad 1 > \alpha>1-e^{-\beta},
\]
for some constants $B,\beta>1$, then $w\in A_\infty$ and $[w]_{A_\infty}\lesssim \beta(1+ \log B) $.
\end{theorem}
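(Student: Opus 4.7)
My strategy links the two halves of the theorem through the sharp reverse Hölder inequality of Hytönen and Pérez: whenever $w\in A_\infty$ there is $\delta=1/(c_n[w]_{A_\infty})$ such that $\bigl(|Q|^{-1}\int_Q w^{1+\delta}\bigr)^{1/(1+\delta)}\leq 2\,w(Q)/|Q|$ for every cube $Q$, from which Hölder's inequality gives the key distributional estimate
\[
\frac{w(F)}{w(Q)}\leq 2\Bigl(\frac{|F|}{|Q|}\Bigr)^{\delta/(1+\delta)}, \qquad F\subseteq Q.
\]
This converts small Lebesgue density inside $Q$ into small $w$-density at a rate controlled by $[w]_{A_\infty}$.

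For the sufficiency direction, fix $E$ with $0<w(E)<\infty$ and set $U_\alpha\coloneqq\{\M\chi_E>\alpha\}$, so that $E\subseteq U_\alpha$ a.e.\ and it suffices to estimate $w(U_\alpha\setminus E)$. Cover $U_\alpha$ by cubes with $|Q\cap E|/|Q|>\alpha$; a Vitali-type selection yields disjoint $\{Q_j\}$ with $U_\alpha\subseteq\bigcup_j 3Q_j$. On each $Q_j$ the displayed reverse Hölder estimate applied to $F=Q_j\setminus E$ gives $w(Q_j\setminus E)\leq 2(1-\alpha)^{\delta/(1+\delta)}w(Q_j)$. For $\alpha>1-e^{-c_n[w]_{A_\infty}}$ this factor is less than $1/2$, so $w(Q_j)\leq 2w(Q_j\cap E)$ and hence $\sum_j w(Q_j)\leq 2w(E)$ by disjointness. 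Summing $w(3Q_j\setminus E)$ with the doubling bound $w(3Q_j)\leq\Delta_w^2 w(Q_j)$, needed to compensate the Vitali expansion, produces the claimed estimate $\Cw(\alpha)-1\leq C_n\Delta_w^2(1-\alpha)^{\delta/(1+\delta)}$. In one dimension the Vitali expansion is unnecessary, which explains the absence of the $\Delta_w^2$ factor.

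For the necessity direction, observe that if $Q$ is a cube and $S\subseteq Q$ has $|S|/|Q|>\alpha$, then $\M\chi_S>\alpha$ pointwise on $Q$, with the cube $Q$ itself serving as the witness, so $Q\subseteq\{\M\chi_S>\alpha\}$ and $w(Q)\leq \Cw(\alpha)w(S)$. Rearranging with $F=Q\setminus S$ and invoking the hypothesis yields
\[
\frac{|F|}{|Q|}<\epsilon,\quad \epsilon<e^{-\beta}\quad\Longrightarrow\quad \frac{w(F)}{w(Q)}\leq \Cw(1-\epsilon)-1\leq B\epsilon^{1/\beta}.
\]
Choose $\epsilon_\ast\coloneqq(eB)^{-\beta}$, taking $B\geq 1$; the implication specializes to $|F|/|Q|<\epsilon_\ast\Rightarrow w(F)/w(Q)<1/e$. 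A standard Calderón--Zygmund iteration, entirely analogous to the Coifman--Fefferman passage from single-scale testing to reverse Hölder, upgrades this to $|F|/|Q|\leq\epsilon_\ast^k\Rightarrow w(F)/w(Q)\leq e^{-k}$ for all $k\in\N$, and interpolation between consecutive scales gives the power law $w(F)/w(Q)\lesssim(|F|/|Q|)^{1/\log(1/\epsilon_\ast)}$. Since $\log(1/\epsilon_\ast)=\beta(1+\log B)$, the classical equivalence between such power-law estimates and $A_\infty$, with sharp quantitative dependence via Hytönen--Pérez, identifies $[w]_{A_\infty}\lesssim\beta(1+\log B)$.

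The main technical obstacle lies in the iteration step of the necessity argument: although the single-scale implication is essentially tautological, self-improving it into genuine exponential decay demands a Calderón--Zygmund decomposition of the ``bad set'' $F$ inside $Q$ with carefully controlled densities at each generation, and the precise balance between $\epsilon_\ast$, $B$, and $\beta$ is what ultimately produces the crisp logarithmic dependence $\beta(1+\log B)$ rather than a naïve bound of order $\beta B$.
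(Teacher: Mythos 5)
Your necessity (optimality) argument is essentially the paper's: you take a cube $Q$ and a small subset $F$, observe that $Q\subseteq\{\M\chi_S>\alpha\}$ when $S=Q\setminus F$ has density $>\alpha$, derive the power law $w(F)/w(Q)\leq B(|F|/|Q|)^{1/\beta}$ for $|F|/|Q|<e^{-\beta}$, and then invoke the quantitative equivalence between such power-law decay and the Fujii--Wilson constant. (In fact, you do not need the Calder\'on--Zygmund iteration you describe as the ``main technical obstacle'': the power law is already established once you let $\epsilon\downarrow|F|/|Q|$, and the passage from that power law to $[w]_{A_\infty}\lesssim\beta(1+\log B)$ is exactly the paper's Lemma~\ref{l.growth}(ii), which is a direct distribution-function computation culminating in a reverse H\"older inequality, not an iteration.)

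The sufficiency direction, however, contains a genuine gap. Your plan is to cover $U_\alpha$ by cubes $Q$ with $|Q\cap E|/|Q|>\alpha$, extract a disjoint Vitali family $\{Q_j\}$ with $U_\alpha\subseteq\bigcup_j3Q_j$, and then bound $w(U_\alpha\setminus E)\leq\sum_j w(3Q_j\setminus E)$. The problem is that nothing controls the Lebesgue density of $E$ in $3Q_j$: the hypothesis $|Q_j\cap E|>\alpha|Q_j|$ gives $|Q_j\setminus E|<(1-\alpha)|Q_j|$ but leaves $|3Q_j\setminus E|$ free to be as large as $(3^n-1)|Q_j|$, i.e.\ a fixed positive fraction of $|3Q_j|$. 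Consequently $w(3Q_j\setminus E)$ need not be small at all --- it can be comparable to $w(3Q_j)\eqsim\Delta_w^2 w(Q_j)$ --- and summing merely yields $w(U_\alpha\setminus E)\lesssim\Delta_w^2 w(E)$, a uniform Tauberian bound rather than a Solyanik estimate whose right-hand side vanishes as $\alpha\to1^-$. (A concrete test: take $E$ a unit cube; then $U_\alpha\setminus E$ is a thin annulus whose $w$-measure tends to $0$, but your estimate via $w(3Q_j\setminus E)$ gives something of the order of $w(3E\setminus E)$, which does not.) This loss is exactly what the paper is engineered to avoid: instead of Vitali with a $3$-expansion, the paper runs a C\'ordoba--Fefferman selection at level $1-\delta$ producing cubes $\tilde Q_j$ with the sharp containment $\bigcup_j Q_j\subseteq\bigcup_j(1+c_n\delta^{1/n})\tilde Q_j$, then controls the $w$-measure of the thin sheath $\bigcup(1+c_n\delta^{1/n})\tilde Q_j\setminus\bigcup\tilde Q_j$ via the enlargement lemma (Lemma~\ref{l.doubling}), and controls the overshoot $\bigcup\tilde Q_j\cap E^{\mathsf c}$ through the increment sets $E_k=\tilde Q_k\setminus\bigcup_{\ell<k}\tilde Q_\ell$, using both $|E_k|>\delta|\tilde Q_k|$ and $|\tilde Q_k\cap E|>\alpha|\tilde Q_k|$. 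The optimization $\delta=(1-\alpha)^{1/2}$ then produces the stated exponent. Vitali alone simply cannot localize the error to a set of small Lebesgue (and hence small $w$) measure, which is the crux of the result.
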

The direct part of Theorem~\ref{t.main2} above gives an estimate on the sharp Tauberian constant of $w$ as $\alpha \to 1^{-}$. Since all the constants are so explicit we can use the methods of \cites{hs,hlp} in order to deduce an effective embedding of $A_\infty$ into $A_p$.

\begin{theorem}\label{t.second} Let $w$ be an $A_\infty$ weight in $\R^n$ and define $[w]_{A_\infty}$ to be the Fujii-Wilson constant of $w$. Then $w\in A_p$ for $p> e^{c_n[w]_{A_\infty}}$ with $[w]_{A_p} \leq  e^{e^{c_n[w]_{A_\infty}}} $, where $c_n>1$ is a dimensional constant.
\end{theorem}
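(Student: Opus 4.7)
The plan is to apply Theorem~\ref{t.main2} at a single carefully chosen value $\alpha_0 = \alpha_0([w]_{A_\infty}) \in (0,1)$ to obtain an explicit one-point Tauberian bound $\Cw(\alpha_0) \leq 2$, and then follow the template of \cites{hs,hlp} to convert this into the quantitative embedding $A_\infty \hookrightarrow A_p$. Throughout, the doubling factor $\Delta_w^2$ appearing in Theorem~\ref{t.main2} is absorbed into the $A_\infty$ constant via the standard estimate $\Delta_w \lesssim e^{c_n'[w]_{A_\infty}}$, valid for Fujii--Wilson $A_\infty$ weights.

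To extract the one-point bound, I choose $\tau_0$ of the form $e^{-c_n''[w]_{A_\infty}}$ satisfying both $\tau_0 < e^{-c_n[w]_{A_\infty}}$ (the range of validity in Theorem~\ref{t.main2}) and $C_n \Delta_w^2 \tau_0^{1/(c_n[w]_{A_\infty})} \leq 1$. Setting $\alpha_0 := 1 - \tau_0$, Theorem~\ref{t.main2} yields $\Cw(\alpha_0) \leq 2$. This already encodes a quantitative $A_\infty$ property of $w$: for any cube $Q$ and $S \subseteq Q$ with $|S|/|Q| > \alpha_0$, every $x \in Q$ satisfies $\M\chi_S(x) \geq |S|/|Q| > \alpha_0$, so $Q \subseteq \{\M\chi_S > \alpha_0\}$, and hence $w(Q) \leq 2\, w(S)$.

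Following \cites{hs,hlp}, I iterate this one-point estimate by a Calder\'on--Zygmund selection argument to produce a power bound $\Cw(\lambda) \leq C_n \lambda^{-\gamma}$ for all $\lambda \in (0,1)$, with $\gamma = \gamma([w]_{A_\infty}) = \log 2/\log(1/\alpha_0) \lesssim e^{c_n[w]_{A_\infty}}$. A layer-cake computation on indicators then gives
\[
\int_{\R^n} (\M \chi_E)^p\, w\, dx = p\int_0^1 \lambda^{p-1} w(\{\M\chi_E > \lambda\})\, d\lambda \leq \frac{p\, C_n}{p-\gamma}\, w(E)
\]
for every $p > \gamma$, which upgrades via a standard restricted weak-type/distribution-function argument to the full bound $\|\M\|_{L^p(w)\to L^p(w)} \leq C(n,\gamma,p)$. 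Choosing $p = 2\gamma \lesssim e^{c_n[w]_{A_\infty}}$ yields $\|\M\|_{L^p(w)\to L^p(w)}^p \lesssim 2^{2\gamma} \leq e^{e^{c_n[w]_{A_\infty}}}$, and the classical necessary condition $[w]_{A_p} \leq \|\M\|_{L^p(w)\to L^p(w)}^p$ (obtained by testing on $f = w^{1-p'}\chi_Q$) closes the proof.

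The main technical obstacle is the Calder\'on--Zygmund iteration that propagates the one-point Tauberian bound $\Cw(\alpha_0) \leq 2$ into the global power decay $\Cw(\lambda) \lesssim \lambda^{-\gamma}$. Although this machinery is already developed in \cites{hs,hlp}, the novelty in Theorem~\ref{t.second} lies entirely in tracking all constants explicitly so as to land precisely at the double-exponential bound on $[w]_{A_p}$; in particular, one must verify that the quantitative control $\Delta_w \lesssim e^{c_n'[w]_{A_\infty}}$ on the doubling constant is compatible with the exponents above so that no circular deterioration occurs. A secondary bookkeeping point is the choice of the dimensional constants in the definition of $\tau_0$ so that the bound $\Cw(\alpha_0) \leq 2$ is attained while keeping $\log(1/\alpha_0)$ as large as possible, which is what ultimately controls $\gamma$ and hence the final exponents.
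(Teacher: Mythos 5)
The central gap is your claim that $\Delta_w \lesssim e^{c_n'[w]_{A_\infty}}$. This is not available, and is in fact false: the only bound on the doubling constant in terms of the Fujii--Wilson constant is \emph{doubly} exponential, $\Delta_w \lesssim e^{e^{c_n[w]_{A_\infty}}}$. This is precisely Corollary~\ref{c.growth}(i) in the paper, and at the end of \S\ref{s.t2} the authors explicitly remark that only a doubly exponential bound is known; indeed, by \cite{BR} the Hru\v{s}\v{c}ev constant $[w]_{A_\infty}'$ can be exponentially larger than $[w]_{A_\infty}$, and the doubling constant is governed polynomially by $[w]_{A_\infty}'$ (Korey), so a singly exponential bound in $[w]_{A_\infty}$ cannot hold.

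This mistake is fatal to your strategy, not just a bookkeeping issue. You need $C_n\Delta_w^2 \tau_0^{1/(c_n[w]_{A_\infty})}\le 1$ to land at $\Cw(\alpha_0)\le 2$, i.e.\ $\tau_0 \le (C_n\Delta_w^2)^{-c_n[w]_{A_\infty}}$. With $\Delta_w\lesssim e^{e^{c_n[w]_{A_\infty}}}$, this forces $\tau_0$ to be doubly exponentially small in $[w]_{A_\infty}$, whence $\gamma=\log 2/\log(1/\alpha_0)\eqsim 1/\tau_0$ is doubly exponentially large, and $p=2\gamma$ and $2^{2\gamma}$ become, respectively, doubly and triply exponential. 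You would thus obtain $w\in A_p$ only for $p>e^{e^{c_n[w]_{A_\infty}}}$ with a triply exponential bound on $[w]_{A_p}$, strictly weaker than the stated theorem on both counts.

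The paper avoids this by \emph{not} normalizing $\Cw(\alpha_0)$ to $2$. It fixes $\alpha_0=1-e^{-c_n[w]_{A_\infty}}$ (so $1-\alpha_0$ is only singly exponentially small) and lives with $\Cw(\alpha_0)\lesssim\Delta_w^2$ being doubly exponential. The crucial observation is that in the iterated Tauberian estimate from \cite{hlp}*{Theorem 6.1}, the resulting restricted weak-type exponent is of the form $p_0 \eqsim \log\Cw(\alpha_0)/\log(1/\alpha_0)$: the \emph{logarithm} of $\Cw(\alpha_0)$ enters the numerator, bringing it down to singly exponential, while the denominator contributes another single exponential, so $p_0$ is singly exponential. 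Marcinkiewicz interpolation then Riesz--Thorin (applied to a linearization of $\M$) and the standard lower bound $\|\M\|_{L^p(w)\to L^p(w)}^p\geq [w]_{A_p}$ then deliver exactly the stated exponents. So your outline is structurally close, but the insistence on a two-point bound $\Cw(\alpha_0)\le 2$ is precisely the move that must be abandoned once the correct doubling estimate is used.
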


We note here that embeddings like the one in Theorem~\ref{t.second} above have been long studied in the theory of weighted norm inequalities and they are typically sharper in one dimension. The most precise result of this kind that we are aware of is contained in \cite{DW}; variations of this embedding depending on different gauges of $A_\infty$ are contained for example in \cites{Kor,Wik}. See also \cites{M,P}. Most of the previously known results use embeddings of the reverse H\"older classes into $A_p$, as for example in \cite{DW}; on the other hand, the embeddings proved in \cite{Kor} use the Hru{\v{s}}{\v{c}}ev constant for $A_\infty$, defined in \cite{Hru}. The result of Theorem~\ref{t.second} is implicit in the literature, at least in dimension $n=1$, as it follows from a combination of the results from \cite{HytP} and \cite{DW}. For higher dimensions, the result of Theorem~\ref{t.second} also follows from a careful reading of \cite{HytP} and \cite{Wik}, but it is not explicitly stated. We include this theorem however, claiming no originality on the result, since our methods are quite different and we think it is worthwhile to highlight this connection.

In order to compare the value of $p\eqsim e^{c_n[w]_{A_\infty}}$ from Theorem~\ref{t.second} to other results in the literature, note for example that $w\in A_\infty$ is essentially equivalent to $w$ satisfying a reverse H\"older inequality
\[
 \bigg(\frac{1}{|Q|}\int_Q w^{1+\frac{1}{c_n[w]_{A_\infty} }} \bigg)^{\frac{1}{1+(c_n[w]_{A_\infty})^{-1}}} \leq 2\frac{1}{|Q|} \int_Q w ;
\]
see for example \cite{HytP} but also Lemma~\ref{l.growth} in the current paper. The exponent $r\coloneqq 1+(c_n[w]_{A_\infty})^{-1}$ in the reverse H\"older inequality above is optimal, up to a dimensional constant. Plugging this information into \cite{DW}*{Theorem 1} gives that, in dimension $n=1$, the optimal value of $p$ such that $w\in A_p$ must satisfy $(p/2)^r -1 =r(p-1)$. Using that $p>2$ and manipulating this equation one obtains $p\gtrsim e^{c [w]_{A_\infty} }$ which matches the embedding into $A_p$ of Theorem~\ref{t.second}.

The rest of this paper is organized as follows. In \S\ref{s.aux} we state and prove some auxiliary asymptotic estimates for enlargements of families of cubes. We also take the chance to recall several known facts about $A_\infty$ weights in a form that will be useful for us later on. In \S\ref{s.t1} we give the proof of Theorem~\ref{t.Soldoubling} based on the lemmas proved in \S\ref{s.aux}. In \S\ref{s.t2} we give the proofs of Theorem~\ref{t.main2} and Theorem~\ref{t.second}. In particular, the proof of Theorem~\ref{t.main2} is contained in Proposition~\ref{p.wT} and Proposition~\ref{p.weighted}.

\section{Notation} We use $C,c>0$ to denote numerical constants whose value may change even in the same line of text. We also write $A\lesssim  B$ if $A\leq  C B$ and $A\eqsim B$ if $A\lesssim B$ and $B\lesssim A$. We state the dependence on the dimension $n$ and the weight $w$ by writing $C_{w,n}$ and $A\lesssim_{w,n} B$ respectively. We typically use the letter $Q$ to denote a cube in $\R^n$ with sides parallel to the coordinate axes and write $cQ$ for the concentric dilation of $Q$ by a factor $c>0$. We write $r_Q$ for the sidelength of $Q$ and $x_Q$ for its geometric center. Finally, weights are non-negative locally integrable functions and $w(Q)\coloneqq \int_Q w$.


\section{An asymptotic estimate for Muckenhoupt weights}\label{s.aux}
Due to the asymptotic nature of Solyanik estimates we many times need to estimate the measure of small dilates of a cube $Q\subset \R^n$ by the measure of the cube itself. To make this more precise let $Q$ be a cube and $\delta\in(0,1)$ be a small parameter. For the Lebesgue measure we then have the trivial estimate
\[
\abs{ (1+\delta)Q \setminus Q } \leq [(1 + \delta)^n - 1]\, |Q| \lesssim_n \delta |Q|.
\]
In order to write down similar estimates for more general measures we recall the notion of a \emph{doubling measure}:

\begin{definition} Let $\mu$ be a (Borel regular) non-negative measure in $\R^n$. We say that $\mu$ is \emph{doubling}  if there exists a constant $\Delta_\mu>0$ such that for every cube $Q\subset \R^n$ we have $\mu(2Q)\leq \Delta_\mu \mu(Q)$. We always assume that $\Delta_\mu>0$ is  the best constant such that the previous inequality holds uniformly for all cubes and we call it the \emph{doubling constant of $\mu$}.
\end{definition}
It is well known that if $\mu$ is a doubling measure in $\R^n$ then there exist constants $c_1,c_2>0$, depending only on $\Delta_\mu$, such that for every cube $Q$ and every $\delta\in(0,1)$ we have
\[
 \mu((1+\delta)Q\setminus Q) \leq c_1 \delta^{c_2} \mu(Q).
 \]
 The previous estimate is in fact equivalent to $\mu$ being doubling. See for example \cite{Kor} for a very nice exposition of this and related results.
%

Asymptotic estimates for single cubes as the ones given above are the first steps to understanding Solyanik estimates. We quickly realize however that we need similar estimates for collections of cubes instead of just single cubes. Thus, given some finite collection of cubes $\{Q_j\}$ and $\delta\in(0,1)$ we will need effective estimates for the measure of $\cup_j (1+\delta)Q_j\setminus \cup_j Q_j$. The main obstruction here is that these cubes might overlap in an arbitrary manner.  A way to deal with this problem is to organize an arbitrary collection of cubes into  \emph{satellite configurations of cubes}.

\begin{definition} Let $\mathcal Q\coloneqq \{Q_j\}_{j=0} ^N$ be a finite collection of cubes in $\R^n$. We say that the collection $\mathcal Q$ is a \emph{satellite configuration of cubes} with center $Q_0$  if  $Q_j\cap Q_0\neq \emptyset$ and $r_{Q_j}\leq r_{Q_0}$ for all $j\in\{1,\ldots,N\}$.
\end{definition}

We mention in passing that a similar (but different) definition has appeared in \cite{FL}. We now give a Lebesgue measure version of the estimate alluded to above.

\begin{lemma}\label{l.lebesgue} Suppose that $\{Q_j\}_{j=1} ^N$ is a collection of cubes in $\R^n$. For all $\delta\in(0,1)$ we have the estimate
	\[
	\Abs{ \bigcup_{j= 0} ^N (1+\delta)Q_j \setminus \bigcup_{j= 0} ^N Q_j } \lesssim_n \delta \Abs{\bigcup_{j=0} ^N Q_j}.
	\]
In particular, if $\{Q_j\}_j$ is a satellite configuration of cubes with center $Q_0$ then the right hand side in the estimate above is $\lesssim_n\delta |Q_0|$.
\end{lemma}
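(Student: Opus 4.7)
The plan is to prove the main inequality; the satellite clause of the lemma then follows immediately, since in a satellite configuration $U\subset 3Q_0$, hence $|U|\leq 3^n|Q_0|$.

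For the main estimate my strategy is a reduction to the satellite case via a Vitali-type extraction, followed by a direct estimate in each satellite configuration. Arrange $\{Q_j\}$ in decreasing order of sidelength and greedily extract a pairwise disjoint subfamily $\{Q_{j_k}\}$ by including a cube whenever it is disjoint from the previously included ones. By the standard Vitali-type analysis, every $Q_j$ intersects some $Q_{j_k}$ with $r_{Q_j}\leq r_{Q_{j_k}}$; assign $Q_j$ to the first such $Q_{j_k}$ to obtain a decomposition of the original family into satellite configurations $\mathcal{S}_k$ centered at the disjoint cubes $Q_{j_k}$. Writing $V_k=\bigcup_{j\in\mathcal{S}_k}(1+\delta)Q_j$ and $U_k=\bigcup_{j\in\mathcal{S}_k}Q_j$, we have $V\setminus U\subset\bigcup_k(V_k\setminus U_k)$, while $\sum_k|Q_{j_k}|\leq|U|$ by disjointness. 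It thus suffices to prove the satellite estimate $|V_k\setminus U_k|\lesssim_n\delta|Q_{j_k}|$.

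For the satellite case at the center $Q_0$, observe that all cubes lie in $3Q_0$ and $(1+\delta)Q_j\subset Q_j+[-\delta r_{Q_0}/2,\delta r_{Q_0}/2]^n$, so $V\setminus U$ is contained in the outer $\ell^\infty$-neighborhood of $U$ of thickness $\delta r_{Q_0}/2$. To bound this, decompose each shell $(1+\delta)Q_j\setminus Q_j$ into the disjoint pieces $R_S^j$ indexed by nonempty $S\subset\{1,\ldots,n\}$, where $R_S^j=\{x\in(1+\delta)Q_j:x_i\notin[a_i^j,b_i^j]\text{ iff }i\in S\}$, and set $R_S=\bigcup_j R_S^j$. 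For $|S|=k$, slice in the $n-k$ coordinates complementary to $S$: at each $y\in\R^{n-k}$, the set of contributing $j$ is exactly those with $y\in\pi_{S^c}(Q_j)$, so $R_S^y\setminus U^y$ is contained in the $k$-dimensional ``enlargement minus base'' associated to the $k$-dimensional slices $Q_j^{S,y}=\prod_{i\in S}[a_i^j,b_i^j]$. Applying the lemma inductively in dimension $k<n$ yields $|R_S^y\setminus U^y|\leq C_k\delta|U^y|$, and Fubini gives $|R_S\setminus U|\leq C_k\delta|U|$. The base case $k=1$ is elementary: the 1D union $\bigcup_j I_j$ decomposes into connected components $J_k$, and each is extended on each side by at most $\delta|J_k|/2$. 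Summing over nonempty $S$ produces the main bound.

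The principal technical obstacle is the ``full-corner'' term $|S|=n$, where the slicing argument degenerates (there are no orthogonal coordinates left to integrate against). I expect to handle this by observing that $R_{\{1,\ldots,n\}}^j$ is contained in the directional slab $\{x\in(1+\delta)Q_j:x_1\notin[a_1^j,b_1^j]\}$, whose projection-adjusted contribution has already been absorbed in the $|S|=1$ bound via a carefully formulated slicing that admits ``dangling'' 1D extensions coming from cubes whose base projection does not contain the slice point $y$ but whose enlarged projection does. Alternatively, one controls it as a smaller-order $O(\delta^n)$ remainder using the containment of the corner set in a single-direction slab and an additional 1D slicing. Either way, one obtains the desired bound $|V\setminus U|\leq C_n\delta|U|$ with a purely dimensional constant, closing the induction and hence the proof.
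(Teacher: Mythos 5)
Your reduction to satellite configurations via Vitali extraction is sound, and the slicing argument for the corner pieces with $|S|=k<n$ does go through (the slice point $y$ lies in $\pi_{S^c}(Q_j)$ exactly when $x\in R_S^j$, so the sliced family is a bona fide $k$-dimensional collection and the inductive hypothesis applies). The 1D base case is also fine. But the $|S|=n$ term is a genuine gap, not a removable technicality, and neither of the two fixes you sketch actually closes it. The ``absorb into $|S|=1$ via a slicing that admits dangling extensions'' idea breaks because for a dangling cube (one with $y$ in the dilated projection but not the base projection) the 1D interval on the slice contributes a segment that need not be adjacent to any interval of $U^y$, so the one-dimensional lemma gives no control. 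The ``$O(\delta^n)$ remainder'' idea also fails quantitatively: $\bigl|\bigcup_j R_{\{1,\dots,n\}}^j\bigr|$ is bounded only by $\delta^n\sum_j|Q_j|$, and in a satellite configuration $\sum_j|Q_j|$ can be as large as $c_n|Q_0|/\delta^n$ (one can pack roughly that many cubes of sidelength comparable to $r_{Q_0}$ whose corner pieces are essentially disjoint inside $4Q_0$), giving only an $O(1)\cdot|Q_0|$ bound, not $O(\delta)\cdot|Q_0|$. So the full-corner term needs a genuinely different mechanism that exploits the overlap structure of the cubes, which the naive size estimate ignores.

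For comparison, the paper avoids the entire corner analysis with a single clean algebraic identity: ordering the cubes by decreasing sidelength and setting $E_0=Q_0$, $E_j=Q_j\setminus\bigcup_{k<j}Q_k$, one shows by induction on $k$ that
$\bigcup_{j\le k}(1+\delta)_jQ_j=\bigcup_{j\le k}(1+\delta)_jE_j$,
where $(1+\delta)_j$ denotes dilation about the center of $Q_j$. The inductive step is a one-line triangle inequality in $\ell^\infty$: if $z\in Q_{k+1}\cap Q_J$ with $J<k+1$ (so $r_{Q_J}\ge r_{Q_{k+1}}$) and $x=x_{Q_{k+1}}+(1+\delta)(z-x_{Q_{k+1}})$, then $\|x-x_{Q_J}\|_{\ell^\infty}\le\delta r_{Q_{k+1}}/2+r_{Q_J}/2\le(1+\delta)r_{Q_J}/2$. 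Since the $E_j$ are disjoint, the measure of the left-hand union is at most $(1+\delta)^n\sum_j|E_j|=(1+\delta)^n|\bigcup_jQ_j|$, which gives the lemma with no Vitali step and no corner cases. This is both shorter and works for general collections directly (no reduction to satellite configurations needed), and as the paper remarks it only uses that the $Q_j$ are balls for a common metric, so it generalizes verbatim.
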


\begin{proof} For $x=(x_1,\ldots,x_n)\in\R^n$ we use the standard $\ell^\infty$-norm defined as $\|x\|_{\ell^\infty}\coloneqq \sup_{1\leq j \leq n}|x_j|$. Furthermore, in order to avoid confusion, we write $(1+\delta)_j S$ for the dilation of a set $S$ with respect to the center of $Q_j$ by a factor $(1+\delta)$. Let us assume that the cubes $Q_j$ are ordered so that their sidelengths are decreasing and set $E_0\coloneqq Q_0$ and $E_j\coloneqq Q_j \setminus \cup_{k<j}Q_k$ for $j\geq 1$. Obviously the sets $E_j$ are pairwise disjoint and $\cup_j Q_j=\cup_j E_j$. Furthermore we claim that for all $\delta\in(0,1)$ and all $k\leq N$ we have the identity
	\begin{equation}\label{e.increm}
	\bigcup_{j=0} ^k (1+\delta)_j Q_j =\bigcup_{j=0} ^k (1+\delta)_j E_j.
	\end{equation}
We prove the claim by induction on $k$. For $k=0$ the claim is trivial so we assume that it is true for $k\geq 0$. It will suffice to show the inclusion
\[
\bigcup_{j=0} ^{k+1} (1+\delta)_j Q_j \subseteq \bigcup_{j=0} ^{k+1} (1+\delta)_j E_j,
\]
the opposite inclusion being trivial. By the inductive hypothesis we have
\[
\bigcup_{j=0} ^{k+1} (1+\delta)_j Q_j =\bigcup_{j=0} ^{k+1} (1+\delta)_j E_j \cup (1+\delta)_{k+1} \big(Q_{k+1} \cap \bigcup_{j<k+1}Q_j \big).
\]
We will show that
\[
 (1+\delta)_{k+1}\big(Q_{k+1} \cap \bigcup_{j<k+1}Q_j\big) \subseteq \bigcup_{j<k+1} (1+\delta)_jE_j.
\]

Let us assume that $x\in (1+\delta)_{k+1}(Q_{k+1} \cap \cup_{j<k+1}Q_j) \neq \emptyset$. Then there exists some $J< k+1$ such that $x\in (1+\delta)_{k+1}(Q_{k+1}\cap Q_J)$. This means that there exists some $z\in Q_{k+1}\cap Q_J$ such that $x=x_{Q_{k+1}}+(1+\delta)(z-x_{Q_{k+1}})$. From this we can write
\[
\begin{split}
\|x-x_{Q_J}\|_{\ell^\infty}&= \|\delta(z -  x_{Q_{k+1}})+(z-x_{Q_J})  \|_{\ell^\infty}  \leq \delta \|z-x_{Q_{k+1}}\|_{\ell^\infty}+ \|z-x_{Q_J}\|_{\ell^\infty}
\\
&\leq \delta \frac{r_{Q_{k+1}}}{2}+\frac{r_{Q_J}}{2} \leq \frac{r_{Q_J}}{2} ( 1+\delta),
\end{split}
\]
the last inequality following since $J<k+1$ and thus $r_{Q_J}\geq r_{Q_{k+1}}$. However this means that $x\in(1+\delta)_JQ_J$. Thus $x\in \cup_{j<k+1} (1+\delta)_j Q_j = \cup_{j<k+1}(1+\delta)_j E_j$ by the inductive hypothesis. This completes the inductive proof and shows that \eqref{e.increm} holds.

Having \eqref{e.increm} at our disposal the rest is routine. Indeed we have
\[
\Abs{\bigcup_{j=0} ^N (1+\delta)_jQ_j } = \Abs{ \bigcup_{j=0} ^N (1+\delta)_jE_j } \leq (1+\delta)^n \sum_{j=0} ^N |E_j|\leq (1+c_n\delta )\Abs{\bigcup_{j=0} ^N Q_j}.
\]
This proves the main claim of the lemma. If $\{Q_j\}_{j=0} ^N$ is a satellite configuration with center $Q_0$ it is immediate that $|\cup_j Q_j|\lesssim_n |Q_0|$.
\end{proof}

\begin{remark}\label{r.balls} It is worth noticing that Lemma~\ref{l.lebesgue} above remains true whenever we have some satellite configuration of \emph{metric balls} in $\R^n$. Indeed, the only thing needed for the proof is identity \eqref{e.increm} which in turn is true whenever our sets $Q_j$ are balls with respect to the same metric in $\R^n$.\qed
\end{remark}
 We now desire to prove a weighted analog of Lemma~\ref{e.increm} above. For this we need to divert a bit and recall some known facts about the class of Muckenhoupt weights $A_\infty$.

\subsection{The class of Muckenhoupt weights \texorpdfstring{$A_\infty$}{Ainf}}  Recall here that $A_\infty$ can be defined as the class of non-negative, locally integrable functions $w$ in $\R^n$ such that
\[
[w]_{A_\infty}\coloneqq \sup_Q \frac{1}{w(Q)}\int_Q \M(w\chi_Q) <+\infty;
\]
the supremum is taken with respect to all cubes in $\R^n$. An equivalent description of $A_\infty$ is
\[
A_\infty=\bigcup_{p\geq 1}A_p
\]
where $A_p$ denotes all the weights $w$ for which
\[
[w]_{A_p}\coloneqq \sup_Q  \bigg(\frac{1}{|Q|}\int_Q w\bigg) \big(\frac{1}{|Q|}\int_Q w^{-\frac{1}{p-1}}\big)^{p-1}<+\infty,
\]
and the supremum is taken over all cubes in $\R^n$. The definition of the class $A_\infty$ by means of the quantity $[w]_{A_\infty}$ goes back to Fujii, \cite{Fu}, and Wilson, \cites{W1,W2}, and has recently been used in order to prove sharp quantitative weighted norm inequalities for maximal functions and singular integrals. See for example \cites{HytP,HytPR,LM}. Further properties of $A_\infty$ weights and equivalent definitions are discussed in many places as for example in \cite{GaRu}. See however \cite{DMRO} for a more up to date discussion of the equivalent definitions of $A_\infty$.

The lemma below encodes some of the deepest properties of $A_\infty$ weights. In particular, the proof of the lemma crucially depends on the reverse H\"older inequality for $A_\infty$ weights with sharp exponent. The characterization of the quantity $[w]_{A_\infty}$ given below is implicit in \cite{HytP}*{p.24--26}.

\begin{lemma}\label{l.growth} Let $w$ be a non-negative, locally integrable function on $\R^n$. The following hold:
\begin{itemize}
\item[(i)] If $w\in A_\infty$ and $[w]_{A_\infty}$ denotes the Fujii-Wilson constant as defined above, then for any cube $Q\subset \R^n$ and any measurable set $S\subseteq Q$ we have
\[
\frac{w(S)}{w(Q)}\leq 2 \big(\frac{|S|}{|Q|}\big)^{(c_n[w]_{A_\infty})^{-1}}
\]
where $c_n>1$ is a dimensional constant.
\item[(ii)] Conversely, if there exist constants $c_1,c_2> 1$ such that for every cube $Q\subset \R^n$ and every measurable $S\subseteq Q$ we have
\[
\frac{w(S)}{w(Q)}\leq c_1 \big(\frac{|S|}{|Q|}\big)^\frac{1}{c_2}
\]
then $w\in A_\infty$ and $[w]_{A_\infty}\lesssim c_2(1+\log c_1)$.
\end{itemize}
\end{lemma}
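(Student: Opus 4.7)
For part (i), the plan is to deduce the stated power-function bound from the sharp reverse Hölder inequality for $A_\infty$ weights invoked in the paragraph preceding the lemma: for $w \in A_\infty$ the exponent $r_w \coloneqq 1 + (c_n[w]_{A_\infty})^{-1}$ satisfies
\[
\left(\tfrac{1}{|Q|}\textstyle\int_Q w^{r_w}\right)^{1/r_w} \leq 2\,\tfrac{1}{|Q|}\int_Q w
\]
for every cube $Q$. Applying Hölder's inequality with exponents $r_w$ and $r_w' = c_n[w]_{A_\infty}+1$ to $w(S)=\int_Q w\chi_S$ yields $w(S) \leq 2\, w(Q)(|S|/|Q|)^{1/r_w'}$. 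Since the Fujii-Wilson constant always satisfies $[w]_{A_\infty}\geq 1$ (test the definition with the cube $Q$ itself), the additive $1$ in $r_w'$ can be absorbed into the dimensional constant, producing the exponent $(c_n[w]_{A_\infty})^{-1}$ claimed in (i).

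For the converse (ii), the plan is to estimate the Fujii-Wilson average $\frac{1}{w(Q)}\int_Q \M(w\chi_Q)\,dx$ by a dyadic layer-cake decomposition. Fix a cube $Q$, set $\lambda_0 \coloneqq w(Q)/|Q|$ and $F_k \coloneqq \{\M(w\chi_Q) > 2^k \lambda_0\}\cap Q$, so that
\[
\textstyle\int_Q \M(w\chi_Q)\,dx \leq \lambda_0|Q| + \lambda_0\sum_{k\geq 0} 2^k |F_k|.
\]
I would deploy two complementary bounds on $|F_k|$. First, the weak-$(1,1)$ bound for $\M$ gives $|F_k|\leq C_n|Q|/2^k$ without invoking the hypothesis. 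Second, by selecting a disjoint Vitali subcollection $\{R_j^k\}$ of cubes witnessing $w(R_j^k\cap Q)/|R_j^k| > 2^k\lambda_0$, applying hypothesis (ii) to $S\coloneqq \bigcup_j R_j^k\cap Q \subseteq Q$, and solving the resulting inequality for $\sum_j |R_j^k|$, one obtains the second bound $|F_k|\leq C_n|Q|(c_1/2^k)^{c_2/(c_2-1)}$.

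The principal obstacle — and really the whole point of the argument — is to combine these two bounds cleverly so as to extract a \emph{logarithmic} rather than polynomial dependence on $c_1$. The two estimates cross at $k^\star \asymp c_2 \log_2 c_1$; using the weak-$(1,1)$ bound for $0\leq k\leq k^\star$ contributes $\sum 2^k|F_k|\lesssim c_2(1+\log c_1)|Q|$, while for $k>k^\star$ the hypothesis-driven bound becomes a geometric series of ratio $2^{-1/(c_2-1)}$ whose sum is $\lesssim c_2|Q|$ after the prefactor $c_1^{c_2/(c_2-1)}$ cancels against $2^{-k^\star/(c_2-1)}$. Assembling the two pieces gives $\int_Q \M(w\chi_Q)\,dx \lesssim_n c_2(1+\log c_1)\,w(Q)$, and taking the supremum over $Q$ yields $[w]_{A_\infty}\lesssim c_2(1+\log c_1)$, as claimed.
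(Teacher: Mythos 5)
Your part (i) is correct and matches the paper's route: both go through the sharp reverse H\"older inequality of Hyt\"onen--P\'erez and deduce the power-function bound via H\"older, absorbing the off-by-one in $r_w'=c_n[w]_{A_\infty}+1$ into the dimensional constant using $[w]_{A_\infty}\geq 1$.

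Your part (ii) is also correct but takes a genuinely different route from the paper. The paper normalizes $w(Q)/|Q|=1$, uses the hypothesis (with a case split at $|S|/|Q|=e^{-2c_2(1+\log c_1)}$) to obtain a clean power bound with constant $e$ and exponent $1/\eta$ with $\eta=2c_2(1+\log c_1)$, then derives a distribution estimate $w(E_\lambda)/w(Q)\lesssim\lambda^{-\eta'/\eta}$, integrates to get a reverse H\"older inequality on $Q$ with exponent $\eta'/(2\eta)$, and finally invokes the reverse-H\"older-to-$A_\infty$ direction of \cite{HytP}*{Theorem~2.3(b)} to close the loop. Your argument instead estimates the Fujii--Wilson average directly: the dyadic layer-cake decomposition plus a Vitali selection converts the hypothesis into the superlevel-set bound $|F_k|\leq C_n|Q|(c_1/2^k)^{c_2/(c_2-1)}$, and the cross-over split at $k^\star\asymp c_2\log_2 c_1$ (weak-$(1,1)$ below, geometric decay above, with the prefactor $c_1^{c_2/(c_2-1)}$ exactly cancelled by $2^{-k^\star/(c_2-1)}$) yields $\int_Q\M(w\chi_Q)\lesssim_n c_2(1+\log c_1)\,w(Q)$. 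What your approach buys is self-containedness for the converse: it avoids routing back through the reverse H\"older characterization and uses only the weak-$(1,1)$ bound for $\M$ and Vitali covering, making it arguably more elementary. What the paper's approach buys is a shorter write-up, since the heavy lifting (the two-sided quantitative equivalence between $[w]_{A_\infty}$ and the reverse H\"older exponent) is delegated to the cited theorem, which in any case is already needed for part (i). Both produce the same bound $[w]_{A_\infty}\lesssim_n c_2(1+\log c_1)$; your verification that $1-2^{-1/(c_2-1)}\gtrsim 1/c_2$ uniformly in $c_2>1$ is the one small point you should make explicit when writing this up.
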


\begin{proof}
If $w\in A_\infty$ we have from \cite{HytP}*{Theorem 2.3} that there exists a dimensional constant $c_n>1$ such that for every cube $Q\subset \R^n$ we have the reverse H\"older inequality
\[
\left(\frac{1}{|Q|}\int_Q w^{1+(c_n[w]_{A_\infty})^{-1}}\right)^{\frac{1}{1+(c_n[w]_{A_\infty})^{-1}}} \leq 2 \frac{1}{|Q|}\int_Q w\;.
\]	
The conclusion in (i) easily follows from this via H\"older's inequality.

In order to prove (ii) let us fix a cube $Q$ and show that $w$ satisfies a reverse H\"older inequality on $Q$. Without loss of generality we can assume that $w(Q)/|Q|=1$. Observe also that the hypothesis implies that
\[
\frac{w(S)}{w(Q)}\leq \big(\frac{|S|}{|Q|}\big)^\frac{1}{2c_2}\quad\text{whenever}\quad \frac{|S|}{|Q|}\leq e^{-2c_2(1+\log c_1) }.
\]
However when $|S|/|Q|>e^{-2c_2(1+\log c_1)}$ it is trivial that
\[
\frac{w(S)}{w(Q)}\leq 1 \leq e \big(\frac{|S|}{|Q|}\big)^\frac{1}{2c_2(1+ \log c_1)}\;.
\]
Combining these estimates we get that in every case
\[
\frac{w(S)}{w(Q)}\leq e \big(\frac{|S|}{|Q|}\big)^\frac{1}{2c_2(1+\log c_1)}
\]
for all cubes $Q\subseteq \R^n$ and all measurable $S\subseteq Q$.  Let us set $\eta\coloneqq 	2c_2(1+\log c_1)$ for the rest of the proof in order to simplify the notation and for $\lambda>0$ we write $E_\lambda \coloneqq\{x\in Q:\, w(x)>\lambda\}$. Observe then that
\[
\frac{|E_\lambda|}{|Q|}\leq\frac{1}{\lambda}\frac{w(E_\lambda)}{w(Q)} \frac{w(Q)}{|Q|}\leq e \frac{1}{\lambda} \big(\frac{|E_\lambda|}{|Q|}\big)^\frac{1}{\eta}\quad\text{and thus}\quad \frac{|E_\lambda|}{|Q|}\leq e^{\eta'}\frac{1}{\lambda^{\eta'}}
\]
where $\eta'$ is the dual exponent of $\eta$. Thus
\[
\frac{w(E_\lambda)}{w(Q)}\leq e \big(\frac{|E_\lambda|}{|Q|}\big)^\frac{1}{\eta}\leq e^{1+\frac{\eta'}{\eta}} \lambda^{-\frac{\eta'}{\eta}}\lesssim \lambda^{-\frac{\eta'}{\eta}}
\]
since $\eta>2$. For $\epsilon>0$ we now write
\[
\begin{split}
\int_Q w^{1+\epsilon}&=\int_0 ^\infty w(E_\lambda)\epsilon  \lambda^{\epsilon-1}d\lambda\lesssim w(Q)+\epsilon w(Q)\int_1 ^\infty  \lambda^{\epsilon-1-\frac{\eta'}{\eta}}d\lambda
\\
&=w(Q)\big(1+\frac{\epsilon}{\frac{\eta'}{\eta}-\epsilon}\big)
\end{split}
\]
as long a $\epsilon<\eta'/\eta$. Now we readily see that for $\epsilon\leq \eta' /2\eta <1/\eta$ the weight $w$ satisfies for every cube $Q$ the reverse H\"older inequality
\[
\bigg(\frac{1}{|Q|}\int w^{1+\epsilon}\bigg)^\frac{1}{1+\epsilon}\lesssim \frac{1}{|Q|}\int_Q  w.
\]
By Theorem~\cite{HytP}*{Theorem 2.3 (b)} we now get that $[w]_{A_\infty}\lesssim_n (1+1/\eta)' \eqsim \eta= c_2(1+\log c_1)$ as we wanted.
\end{proof}
The following standard estimates for $A_\infty$ weights follow immediately from the previous lemma. We omit the simple proof.

\begin{corollary}\label{c.growth}Let $w\in A_\infty$.
\begin{itemize}
	\item [(i)] The measure $d\mu(x)\eqqcolon w(x)dx$ is doubling with doubling constant
	\[
	\Delta_w \lesssim \exp \big[\log 2 / \log(1+C_n e^{-c_n[w]_{A_\infty}})\big]\eqsim e^{e^{c_n[w]_{A_\infty}}} ,
	\]
	where $c_n>0$ is a dimensional constant.
	\item[(ii)] For every  two cubes $Q_1\subseteq Q_2$ with sidelengths $r_1,r_2$, respectively, we have the estimate
	\[
	 \frac{ w(Q_1) }{ w(Q_2) } \gtrsim_{w,n}  \big(  \frac{r_1}{r_2} \big) ^ {\gamma_{w,n} } ;
	\]
	the constants depend only upon $w$ and the dimension.
	\end{itemize}
\end{corollary}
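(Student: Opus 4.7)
The plan is to derive both parts directly from Lemma~\ref{l.growth}(i), which gives the key estimate $w(S)/w(Q) \leq 2(|S|/|Q|)^{\beta}$ for $S \subseteq Q$, where $\beta \coloneqq (c_n[w]_{A_\infty})^{-1}$. The subtlety is that applying this lemma with $S=Q$ inside the ambient cube $2Q$ yields $w(Q)/w(2Q) \leq 2 \cdot 2^{-n\beta}$, which can exceed $1$ once $[w]_{A_\infty}$ is large, so it gives no information. The trick is to start with dilations so small that Lemma~\ref{l.growth}(i) becomes nontrivial, and then bootstrap.

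For part (i), I would first show the "weak doubling" statement
\[
w((1+\delta)Q) \leq 2\, w(Q) \qquad\text{whenever}\qquad \delta \leq C_n e^{-c_n[w]_{A_\infty}}
\]
for an appropriate dimensional constant. This follows by applying Lemma~\ref{l.growth}(i) to the cube $(1+\delta)Q$ with $S \coloneqq (1+\delta)Q \setminus Q$, giving
\[
\frac{w((1+\delta)Q\setminus Q)}{w((1+\delta)Q)} \leq 2\bigl(1-(1+\delta)^{-n}\bigr)^{\beta} \leq 2(n\delta)^{\beta}.
\]
For $\delta$ as above, $2(n\delta)^{\beta} \leq 1/2$, which rearranges to the weak doubling bound. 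Iterating this estimate $N \coloneqq \lceil \log 2/\log(1+\delta)\rceil$ times with $(1+\delta)^N \geq 2$ yields
\[
w(2Q) \leq w\bigl((1+\delta)^{N}Q\bigr) \leq 2^{N} w(Q),
\]
so $\Delta_w \leq 2^{N} \lesssim \exp\!\bigl[\log 2/\log(1+C_n e^{-c_n[w]_{A_\infty}})\bigr]$; since $\log(1+x) \eqsim x$ as $x \to 0^+$, this is comparable to $e^{e^{c_n[w]_{A_\infty}}}$.

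For part (ii), I would reduce to an iteration of the doubling constant obtained in (i). Given two cubes $Q_1 \subseteq Q_2$ with sidelengths $r_1 \leq r_2$, a direct computation using that $Q_1 \subseteq Q_2$ shows $Q_2 \subseteq 2^{k}Q_1$ (concentric dilation of $Q_1$) whenever $2^{k-1} r_1 \geq r_2$, that is, for any $k \geq \log_2(r_2/r_1) + 1$. Then
\[
w(Q_2) \leq w(2^{k}Q_1) \leq \Delta_w^{k}\, w(Q_1),
\]
which after optimizing over $k$ yields
\[
\frac{w(Q_1)}{w(Q_2)} \geq \Delta_w^{-2}\Bigl(\frac{r_1}{r_2}\Bigr)^{\log_2 \Delta_w},
\]
so we may take $\gamma_{w,n} = \log_2 \Delta_w$ and the implicit constant $\Delta_w^{-2}$, both depending only on $w$ and $n$ by part (i).

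The only place where any care is needed is in the first step: one must choose the dimensional constant $C_n$ in $\delta \leq C_n e^{-c_n[w]_{A_\infty}}$ carefully enough that $2(n\delta)^{\beta} \leq 1/2$ holds, which amounts to solving $4 n\delta \leq e^{-1/\beta}$, i.e.\ an explicit numerical calibration. Everything else is a routine iteration of Lemma~\ref{l.growth}(i) followed by standard doubling chain arguments.
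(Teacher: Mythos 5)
Your proof is correct, and since the paper omits the argument (stating only that the corollary ``follows immediately from the previous lemma''), your derivation via iterating the weak doubling estimate from Lemma~\ref{l.growth}(i) on small concentric dilations is exactly the intended route. Your calibration $1-(1+\delta)^{-n}\leq n\delta$, the choice $\delta\eqsim e^{-c_n[w]_{A_\infty}}$, and the reduction of (ii) to a doubling chain $Q_2\subseteq 2^kQ_1$ are all sound, and the resulting bound $2^{\lceil\log 2/\log(1+\delta)\rceil}$ is in fact slightly sharper than the stated $\exp[\log 2/\log(1+\delta)]$.
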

We now state a weighted version of Lemma~\ref{l.lebesgue}.
\begin{lemma}\label{l.doubling}Let $w\in A_\infty $ be a Muckenhoupt weight in $\R^n$ and $\mathcal Q=\{Q_j\}_{j=0} ^N$ be a satellite configuration of cubes with center $Q_0$. Then for all $\delta\in(0,1)$ we have
	\[
w\big( \bigcup_{j= 0} ^N (1+\delta)Q_j \setminus \bigcup_{j= 0} ^N Q_j \big) \lesssim_{n} \Delta_w ^2 \delta ^{(c_n[w]_{A_\infty})^{-1}}  w(Q_0)
	\]
where $c_n>1$ is a dimensional constant and $\Delta_w$ is the doubling constant of $w$.
\end{lemma}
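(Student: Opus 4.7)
The plan is to reduce the weighted estimate to the Lebesgue estimate of Lemma~\ref{l.lebesgue} by comparing everything to a single ``parent'' cube $Q^{*}$ containing the entire configuration, and then invoke the sharp growth estimate of Lemma~\ref{l.growth}(i) together with doubling.

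First I would observe that, since $\{Q_j\}_{j=0}^{N}$ is a satellite configuration with center $Q_0$, each cube $Q_j$ touches $Q_0$ and has $r_{Q_j}\leq r_{Q_0}$. Working in the $\ell^\infty$ metric as in the proof of Lemma~\ref{l.lebesgue}, the center of $Q_j$ lies within distance $(r_{Q_0}+r_{Q_j})/2\leq r_{Q_0}$ of $x_{Q_0}$, and for $\delta\in(0,1)$ any point of $(1+\delta)Q_j$ is within distance $(1+\delta)r_{Q_j}/2\leq r_{Q_0}$ of $x_{Q_j}$. Hence every $(1+\delta)Q_j$ is contained in the cube $Q^{*}\coloneqq 4Q_0$ concentric with $Q_0$; in particular,
\[
S\coloneqq \bigcup_{j=0}^{N}(1+\delta)Q_j\setminus \bigcup_{j=0}^{N}Q_j\subseteq Q^{*}.
\]

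Next, Lemma~\ref{l.lebesgue} applied to the satellite configuration gives
\[
|S|\lesssim_n \delta\, |Q_0|\eqsim_n \delta\, |Q^{*}|,
\]
so $|S|/|Q^{*}|\lesssim_n \delta$. Since $S\subseteq Q^{*}$ and $w\in A_\infty$, Lemma~\ref{l.growth}(i) yields
\[
\frac{w(S)}{w(Q^{*})}\leq 2\Bigl(\frac{|S|}{|Q^{*}|}\Bigr)^{(c_n[w]_{A_\infty})^{-1}}\lesssim_n \delta^{(c_n[w]_{A_\infty})^{-1}},
\]
where I absorb the dimensional constant $c_n$ (raised to an exponent at most $1$) into the implicit constant.

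Finally I use doubling to replace $w(Q^{*})=w(4Q_0)$ by $w(Q_0)$. Two applications of the doubling property give $w(4Q_0)\leq \Delta_w^{2}\,w(Q_0)$, which accounts for the factor $\Delta_w^{2}$ in the conclusion. Combining the last two displays proves the lemma. The one point that needs care is the geometric claim that $(1+\delta)Q_j\subseteq 4Q_0$ uniformly in $j$, which pins down the ``parent'' cube whose dilation factor is exactly $2^{2}$ and thereby produces the sharp $\Delta_w^{2}$; no other step presents a real obstacle.
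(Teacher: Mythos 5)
Your proof is correct and follows essentially the same route as the paper: establish the containment $\bigcup_j(1+\delta)Q_j\subseteq 4Q_0$, apply Lemma~\ref{l.lebesgue} to bound the Lebesgue measure of the enlargement, pass to the $w$-measure via Lemma~\ref{l.growth}(i), and finish with two applications of doubling to replace $w(4Q_0)$ by $\Delta_w^2\,w(Q_0)$.
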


\begin{proof} Since $\{Q_j\}_{j=0} ^N$ is a satellite configuration of cubes it is immediate that $\cup_j Q_j \subseteq 3Q_0$ and thus $\cup_j(1+\delta)Q_j \subseteq 4Q_0$. Setting $E\coloneqq \cup_{j= 0} ^N (1+\delta)Q_j \setminus \cup_{j= 0} ^N Q_j \subset 4Q_0$ and using Lemma~\ref{l.lebesgue} we have that $|E|\lesssim_n \delta |4Q_0|$. Since $w\in A_\infty$ we can conclude that
\[
\frac{w(E)}{w(4Q_0)} \leq 2 \Big(\frac{|E|}{|4Q_0|}\Big)^{  (c _n[w]_{A_\infty})^{-1}}\lesssim_n \delta^ {  (c _n[w]_{A_\infty})^{-1}}.
\]
However, since $w\in A_\infty$ we have that $w$ is doubling which gives $w(E)\lesssim_{n} \Delta_w ^2  \delta^{  (c _n[w]_{A_\infty})^{-1}} w(Q_0)$ as we wanted.
\end{proof}

The heart of the matter is the following asymptotic estimate controlling the $w$-measure of unions of $(1+\delta)$-enlarged cubes by the $w$-measure of the union of the cubes themselves.

\begin{lemma}\label{l.collect}Let $w\in A_\infty$ be a Muckenhoupt weight on $\R^n$ and let $\{Q_j\}_{j=0} ^N$ be a finite collection of cubes. Then
	\[
	w\big(\bigcup_j(1+\delta)Q_j \setminus \bigcup_j Q_j\big)\lesssim_{w,n} \delta^{  (c _n[w]_{A_\infty})^{-1}} w\big(\bigcup_j Q_j\big);
	\]
the  constant $c_n>1$ depends only upon $n$.
\end{lemma}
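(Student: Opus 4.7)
The plan is to bootstrap Lemma~\ref{l.doubling} from satellite configurations to arbitrary finite collections of cubes by performing a Vitali-type selection and packaging the remaining cubes into satellites around the selected ones.

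First I would apply the standard Vitali selection to $\{Q_j\}_{j=0}^N$: order the cubes by decreasing sidelength, greedily select $Q_{j_1}, Q_{j_2},\ldots$ so that the $Q_{j_k}$ are pairwise disjoint and every other cube $Q_j$ intersects some $Q_{j_k}$ with $r_{Q_{j_k}}\geq r_{Q_j}$. For each $k$, define the satellite configuration $\mathcal Q_k$ with center $Q_{j_k}$ to consist of $Q_{j_k}$ together with all unselected cubes $Q_j$ that are assigned to $Q_{j_k}$ (using any fixed tie-breaking rule). By construction each $\mathcal Q_k$ is a satellite configuration in the sense of the definition preceding Lemma~\ref{l.lebesgue}, and every $Q_j$ belongs to exactly one $\mathcal Q_k$.

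Next I would establish the set inclusion
\[
\bigcup_j (1+\delta)Q_j \setminus \bigcup_j Q_j \;\subseteq\; \bigcup_k \Bigl(\bigcup_{Q\in\mathcal Q_k}(1+\delta)Q \setminus \bigcup_{Q\in\mathcal Q_k} Q\Bigr).
\]
Indeed, if $x$ lies in the left-hand side, pick $j$ with $x\in (1+\delta)Q_j$ and let $\mathcal Q_k$ be the satellite containing $Q_j$; then $x\in\bigcup_{Q\in\mathcal Q_k}(1+\delta)Q$, while $x\notin\bigcup_j Q_j$ forces in particular $x\notin\bigcup_{Q\in\mathcal Q_k} Q$. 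Applying Lemma~\ref{l.doubling} to each satellite configuration $\mathcal Q_k$ and summing gives
\[
w\Bigl(\bigcup_j(1+\delta)Q_j \setminus \bigcup_j Q_j\Bigr) \lesssim_n \Delta_w^2\, \delta^{(c_n[w]_{A_\infty})^{-1}} \sum_k w(Q_{j_k}).
\]
Since the selected cubes $Q_{j_k}$ are pairwise disjoint and each is contained in $\bigcup_j Q_j$, we have $\sum_k w(Q_{j_k}) = w(\bigcup_k Q_{j_k}) \leq w(\bigcup_j Q_j)$. Absorbing $\Delta_w^2$ into the $\lesssim_{w,n}$ constant via Corollary~\ref{c.growth}(i) yields the desired estimate.

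The only point that requires a bit of care is verifying the Vitali-plus-satellite decomposition is set up so that every cube is captured in some $\mathcal Q_k$ while all the centers $Q_{j_k}$ remain disjoint; beyond that, the argument is essentially bookkeeping on top of the satellite estimate already proved.
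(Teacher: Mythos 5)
Your proof is correct and follows essentially the same route as the paper: a Vitali selection of pairwise disjoint cubes, grouping the remaining cubes into satellite configurations around the selected ones, and applying Lemma~\ref{l.doubling} to each configuration before summing over the disjoint centers. The only cosmetic difference is that you partition the cubes among the centers via a tie-breaking rule, whereas the paper allows a cube to appear in more than one satellite family $\mathcal Q_R$; both versions make the key inclusion and the subsequent summation go through.
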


\begin{proof} Given the finite collection of cubes $\mathcal Q\coloneqq \{Q_j\}_j$ we assume that they are ordered according to decreasing sidelengths. By the Vitali covering lemma there exists a disjoint subcollection $\mathcal R\coloneqq \{R_j\}_j \subseteq \mathcal Q$ such that if $Q\in \mathcal Q\setminus \mathcal R$ were not selected then there exists $R\in\mathcal R$ with $R\cap Q\neq \emptyset$ and $r_R\geq r_Q$. Of course we have $\cup_j Q_j \subseteq \cup_j 3R_j$.

For each $R\in\mathcal R$ we let $\mathcal Q_R$ be the collection of cubes $Q\in\mathcal Q$ such that $Q\cap R\neq\emptyset$ and $r_Q\leq r_R$. By the previous comments we have the identity
	\[
	\bigcup_{Q\in\mathcal Q}Q = \bigcup_{R\in\mathcal R} \bigcup_{Q\in\mathcal Q_R} Q\;.
	\]
Note that each collection $\mathcal Q_R$ is a satellite configuration with center $R$ and that $R\in\mathcal Q_R$. Thus, Lemma~\ref{l.doubling} implies that
\[
w\big(\bigcup_{Q\in\mathcal Q_R}(1+\delta)Q\setminus \bigcup_{Q\in\mathcal Q_R }Q\big)\lesssim_{w,n} \delta^{(c_n [w]_{A_\infty})^{-1}}w(R).
\]
It now follows that
\[
\begin{split}
w\big(\bigcup_{Q\in\mathcal Q}(1+\delta)Q\setminus \bigcup_{Q\in\mathcal Q }Q\big)&\leq \sum_{R\in\mathcal R}w\big(\bigcup_{Q\in\mathcal Q_R}(1+\delta)Q\setminus \bigcup_{Q\in\mathcal Q_R }Q\big)
\\
&\lesssim_{w,n} \delta^{(c_n [w]_{A_\infty})^{-1}} \sum_{R\in\mathcal R} w(R)= \delta^ {(c_n[w]_{A_\infty})^{-1}} w(\cup_{R\in\mathcal R} R)
\\
&  \lesssim \delta^{(c_n [w]_{A_\infty})^{-1}}w(\cup_j Q_j),
\end{split}
\]
since all the $R\in\mathcal R$ are pairwise disjoint and contained in $\cup_j Q_j$. This proves the lemma.
\end{proof}

\section{A Solyanik estimate for Muckenhoupt weights}\label{s.t1} In this section we turn to our main task of proving a Solyanik estimate for the uncentered maximal function $\Mmu$ defined with respect to cubes and a Muckenhoupt weight $w$ in $\R^n$. The following covering argument is a weighted variation of the argument used in \cite{HP}*{Theorem 3}.

\begin{lemma}\label{l.covering} Let $\{Q_j\}_{j=1} ^N$ be a finite collection of cubes in $\R^n$ and $\xi\in(0,1)$, and let $w\in A_\infty$ be a Muckenhoupt weight in $\R^n$. There exists a subcollection $\{\tilde Q_k\}_{k=1} ^M\subseteq \{Q_j\}_{j=1} ^N$ such that
	\begin{itemize}
		\item[(i)] We have the estimate
		\[
	w\big(\bigcup_{j=1} ^N Q_j\big) \leq (1+C_{w,n} \xi^{c_{w,n}}) w\big(\bigcup_{j=1} ^M \tilde Q_j\big),
	\]
where the constants $C_{w,n},c_{w,n}>1$ depend only on the dimension $n$ and the weight $w$.	
	\item[(ii)] For every $j\geq 2$ we have  $w(  \tilde Q_j\setminus\cup_{k<j}\tilde Q_k ) >\xi w(\tilde Q_j). $
		\end{itemize}
\end{lemma}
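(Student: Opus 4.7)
The plan is to construct $\{\tilde Q_k\}$ by a greedy selection that directly encodes (ii), and then to use the asymptotic estimate of Lemma~\ref{l.collect} to deduce (i) after establishing that the selected cubes cover $\bigcup_j Q_j$ up to a slight enlargement. Concretely, order the cubes by decreasing sidelength, set $\mathcal S_0 \coloneqq \emptyset$, and at step $j$ add $Q_j$ to the selection if
\[
w\bigl(Q_j \setminus \bigcup_{\tilde Q \in \mathcal S_{j-1}} \tilde Q\bigr) > \xi\, w(Q_j),
\]
letting $\mathcal S_j$ be the resulting collection. Relabel the surviving cubes in the order of selection as $\tilde Q_1, \ldots, \tilde Q_M$. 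Property (ii) is immediate from the selection rule.

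To prove (i) I would first establish the geometric containment
\[
\bigcup_{j=1}^N Q_j \subseteq \bigcup_{k=1}^M (1+\delta)\tilde Q_k \quad\text{for some}\quad \delta \eqsim_{w,n} \xi^{c}
\]
with $c = c(w,n) > 0$. Selected cubes are trivially in their own enlargements, so only the discarded cubes require work. For such a $Q_j$ the cubes in $\mathcal S_{j-1}$ all have sidelength $\geq r_{Q_j}$, and by failure of the selection criterion $w(Q_j \setminus \bigcup_{\tilde Q \in \mathcal S_{j-1}}\tilde Q) \leq \xi\, w(Q_j)$. To convert this $w$-bound into a Lebesgue bound I would invoke $A_\infty = \bigcup_{p\geq 1}A_p$: pick $p = p(w)$ with $w \in A_p$, and the $A_p$ condition combined with H\"older's inequality yields $|S|/|Q| \lesssim_{w,n} (w(S)/w(Q))^{1/p}$ for $S \subseteq Q$, so
\[
|Q_j \setminus \bigcup_{\tilde Q \in \mathcal S_{j-1}}\tilde Q| \lesssim_{w,n} \xi^{1/p} |Q_j|.
\]
The same $\ell^\infty$-computation used in the proof of Lemma~\ref{l.lebesgue} shows that if some $x \in Q_j$ lay outside every $(1+\delta)\tilde Q$ with $\tilde Q \in \mathcal S_{j-1}$, then the cube of side $\delta r_{Q_j}/2$ centered at $x$ would be disjoint from all such $\tilde Q$ (using $r_{\tilde Q}\geq r_{Q_j}$), and its intersection with $Q_j$ would have Lebesgue measure at least $(\delta/4)^n|Q_j|$. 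Choosing $\delta \eqsim_{w,n} \xi^{1/(np)}$ makes this incompatible with the Lebesgue bound above, so no such $x$ exists, yielding the claimed containment.

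With the containment in hand, Lemma~\ref{l.collect} gives
\[
w\bigl(\bigcup_k (1+\delta)\tilde Q_k \setminus \bigcup_k \tilde Q_k\bigr) \lesssim_{w,n} \delta^{1/(c_n[w]_{A_\infty})} w\bigl(\bigcup_k \tilde Q_k\bigr),
\]
and substituting $\delta \eqsim_{w,n} \xi^{1/(np)}$ produces (i) with an exponent $c_{w,n} \eqsim 1/(np\cdot c_n[w]_{A_\infty})$ and a constant $C_{w,n}$ inheriting the dependence on $w$ and $n$ from both the $A_p$ step and Lemma~\ref{l.collect}. The main obstacle is precisely the passage from a $w$-measure bound to a Lebesgue bound on the uncovered part of $Q_j$: Lemma~\ref{l.growth}(i) runs in the opposite direction, so one is forced to access the reverse inequality through the $A_\infty = \bigcup_p A_p$ decomposition, which is what forces the final exponent and constant to depend on $w$ beyond $[w]_{A_\infty}$ alone.
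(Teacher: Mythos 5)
Your proof is correct and is built on the same skeleton as the paper's: the C\'ordoba--Fefferman selection (your criterion $w(Q_j\setminus\bigcup\tilde Q)>\xi w(Q_j)$ is exactly the paper's $w(Q_j\cap\bigcup\tilde Q)\leq(1-\xi)w(Q_j)$), followed by the geometric inclusion $\bigcup_j Q_j\subseteq\bigcup_k(1+\delta)\tilde Q_k$ with $\delta\eqsim_{w,n}\xi^{c}$, followed by Lemma~\ref{l.collect}. The genuine divergence is in how the inclusion is proved. The paper fixes a discarded cube $Q$ and a point $x\in Q$, constructs a subcube $Q_x\ni x$, $Q_x\subseteq Q$, with $w(Q_x)=\xi w(Q)$ (using that $A_\infty$ weights do not charge cube boundaries), observes that $Q_x$ must meet some previously selected, larger cube (else $Q_x$ would lie in the uncovered set, which has $w$-measure strictly less than $\xi w(Q)$), and controls $r_{Q_x}/r_Q$ via Corollary~\ref{c.growth}(ii), which is essentially the doubling property alone. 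You instead transfer the $w$-measure bound on the uncovered portion of $Q_j$ to a Lebesgue bound via the standard $A_p$ reverse estimate $|S|/|Q|\leq[w]_{A_p}^{1/p}(w(S)/w(Q))^{1/p}$, and then run a purely Euclidean packing argument: a putative bad point $x$ would pin an uncovered cube of side $\eqsim\delta r_{Q_j}$ inside $Q_j$, and choosing $\delta\eqsim_{w,n}\xi^{1/(np)}$ makes that incompatible with the Lebesgue bound. Your route avoids the ``cube of prescribed $w$-mass'' construction and keeps the geometry in Lebesgue measure, which is a little more transparent; the paper's version nominally uses only doubling for this step, which is weaker than $A_p$, but since $p$ and the doubling exponent $\gamma_{w,n}$ are each controlled by the same exponential gauge of $[w]_{A_\infty}$, the constants produced are of the same quality, and in either case $c_{w,n}$ inevitably depends on $w$ beyond $[w]_{A_\infty}$ alone, as you correctly note. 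One small technical point, shared with the paper's write-up: when $\xi$ is not small the resulting $\delta$ may exceed $1$, outside the range for which Lemma~\ref{l.collect} is stated; in that regime (i) reduces to a crude constant-factor bound that follows from doubling alone, so this is easily patched and not a real gap.
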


\begin{proof}The proof is a modified C\'ordoba-Fefferman selection algorithm inspired by the one in \cite{CF}. We begin by ordering our cubes $\{Q_j\}$ so that $|Q_1|\geq |Q_2|\geq \cdots\geq |Q_N|$ and we select $\tilde Q_1 \coloneqq Q_1$. Now assuming $\tilde Q_1,\ldots,\tilde Q_j \eqqcolon Q_J $ have been selected we choose $\tilde Q_{j+1}$ to be the first cube $Q\in\{Q_{J+1},\ldots,Q_N\}$ that satisfies
	\[
	w(Q\cap \cup_{k\leq j}\tilde Q_j) \leq (1-\xi) w(Q).
	\]
If no such cube can be selected the selection algorithm terminates. Now if $Q$ is one of the cubes not selected and $x\in Q$ we consider a cube $Q_x\subseteq Q$ such that $x\in Q_x$ and $w(Q_x)=\xi w(Q)$. This is in fact possible since any $A_\infty$-weight does not charge boundaries of cubes. Since the cube $Q$ was not selected we have that
\[
w(Q\cap \cup_{j<j_Q}) \tilde Q_j ) >(1-\xi)w(Q)
\]
for some $j_Q\leq M$. This implies that the cube $Q_x$ must intersect one of the cubes $\tilde Q_j$ selected before, and thus of larger sidelength than $Q$. Furthermore, we can use Corollary~\ref{c.growth} (ii) in order to obtain
\[
\xi = \frac{w(Q_x)}{w(Q)} \gtrsim_{w,n}  \big( \frac {r_{Q_x}}{r_Q} \big)^{\gamma_{w,n}}\;.
\]
The last estimate together with the triangle inequality and the discussion above show the inclusion
\[
\bigcup_{j=1} ^N Q_j\subseteq \bigcup_{j=1} ^M  (1+  C_{w,n} \xi^{c_{w,n} })\tilde Q_j
\]
for some constants $ C_{w,n},c_{w,n}$ depending only on $n$ and $w$. Part (i) of the lemma now follows from Lemma~\ref{l.collect} while part (ii) is automatically satisfied because of the selection algorithm.
\end{proof}
\begin{proof}[Proof of Theorem~\ref{t.Soldoubling}] Let $E$ be a measurable set in $\R^n$ and consider any $K\subseteq E_\alpha\coloneqq\{x\in\R^n:\, \Mmu(\chi_E)(x)>\alpha\}$, where $K$ is compact. There exists a finite collection of cubes $\{Q_j\}_j$ such that $K\subseteq \cup_j Q_j$ and for each $j$ we have $w(E\cap Q_j) >\alpha w(Q_j)$. For $\xi\in(0,1)$, to be chosen momentarily, we let $\{\tilde Q_j\}_j$ be the subcollection of $\{Q_j\}_j$ provided by Lemma~\ref{l.covering}; we have
\[
 w(K)\leq w(\cup_j Q_j)\leq	(1+C_{w,n}\xi^{c_{w,n}}) w(\cup_j \tilde Q_j).
\]
Now letting $\tilde E_j\coloneqq \tilde Q_j \setminus \cup_{k< j} \tilde Q_j $ for $j\geq 2$ and $\tilde E_1\coloneqq Q_1$, part (ii) of Lemma~\ref{l.covering} implies that
\[
w( E\cap \tilde E_j) > \big(1-\frac{1-\alpha}{\xi}\big)w(\tilde E_j).
\]
Thus for $\xi \in (1-\alpha,1)$ we conclude
\[
w(\cup_j \tilde Q_j) =\sum_j w(\tilde E_j) \leq \frac{\xi}{\xi-(1-\alpha)}w(E).
\]
Combining the previous estimates yields
\[
w(K)\leq (1+C_{w,n}\xi^{c_{w,n}})\frac{\xi}{\xi-(1-\alpha)}w(E).
\]
Choosing $\xi \coloneqq (1-\alpha)^{\frac{1}{c_{w,n}+1}}$ gives the theorem.
\end{proof}

\section{Weighted Solyanik estimates and allied issues}\label{s.t2}
In this concluding section we make a comparison between Solyanik estimates and corresponding results in the literature of weighted norm inequalities for maximal functions and related issues. We also discuss \emph{weighted Solyanik estimates} for $A_\infty$ weights. The difference is that now there are two measures involved in such an estimate: the Hardy-Littlewood maximal operator $\M$ is defined with respect to the Lebesgue measure while the measure in the ambient space is $w\in A_\infty$.
\subsection{Pointwise coverings and Solyanik estimates with respect to measures}\label{s.easy} Let $\M_\mu ^{\mathsf{c}}$ denote the centered Hardy-Littlewood maximal operator defined with respect to a locally finite non-negative Borel measure $\mu$:
\[
\M_\mu ^{\mathsf c}f(x)\coloneqq \sup_{r>0}\frac{1}{\mu(Q(x,r))}\int_{Q(x,r)} |f(y)|d\mu(y);
\]
here $Q(x,r)$ is a cube in $\R^n$ with center $x$ and sidelength $2r$. It is well known that this operator is bounded on $L^p(\mu)$ independently of the measure $\mu$ and this is a direct consequence of the Besicovitch covering lemma. The same is true for the dyadic maximal operator $\M_\mu ^{\mathsf{d}}$ defined with respect to $\mu$ and the one-dimensional non-centered maximal operator $\M_\mu ^{1}$ defined with respect to $\mu$. Underlying all these cases is a sharp covering lemma; we already mentioned Besicovitch for the centered operator while for the dyadic maximal function this is a consequence of the ultrametric structure of dyadic cubes; in one dimension we have the powerful covering lemma, specific to the topology of the real line, which says that any collection of intervals can be covered by a subcollection of (pointwise) overlap at most $2$.

It is an easy guess that for these operators we should have Solyanik estimates independently of the measure $\mu$ involved in their definition. This turns out to be true and the proof of this fact is a trivial modification of the proof of the corresponding weak type $(1,1)$ inequalities.

For example, for $\M_\mu ^1$ we have that any compact set $K\subseteq \{x\in \R:\, \M_\mu ^{1}(\chi_E)>\alpha\}$ can be covered by a finite sequence of intervals $I_j$ such that $\mu(I_j\cap E)>\alpha \mu(I_j)$ and $\sum_j \chi_{I_j}\leq 2$; see for example~\cite{Gar}*{p. 24}. It follows that
\[
\mu(K) \leq \mu(E)+\sum_{j}\mu(E^\mathsf{c}\cap I_j)\leq \mu(E)+\frac{1-\alpha}{\alpha}\sum_j \mu(I_j\cap E)\leq (1+2\frac{1-\alpha}{\alpha})\mu(E)
\]
and thus, in this case $\C_\mu(\alpha)\leq \frac{2-\alpha}{\alpha}$. It is also easy to check that this bound is best possible. Completely analogous arguments work for the other operators discussed in this paragraph and give estimates of the type $\C_\mu(\alpha)-1\eqsim_n (1-\alpha)$, independently of the measure $\mu$.

\subsection{Weighted Tauberian constants, Solyanik estimates and \texorpdfstring{$A_\infty$}{Ainf}} Things get more interesting if we consider a non-negative locally integrable function $w$, that is, a weight, and define the weighted Tauberian constant
\[
\Cw(\alpha)\coloneqq \sup_{E:\,0<|E|<\infty}w(E)^{-1} w(\{x\in\R^n:\, \M \chi_E(x)>\alpha\}).
\]
Note here that $\M$ is the usual non-centered Hardy-Littlewood maximal function defined with respect to the Lebesgue measure. It is natural to ask under what conditions on $w$ we have a Solyanik estimate
\begin{equation}\label{e.weightSol}
\lim_{\alpha\to 1^- }\Cw(\alpha)=1.
\end{equation}
By the results in \cite{hlp} we know that $w\in A_\infty$ if and only if there exists $\alpha\in(0,1)$ such that $\Cw(\alpha)<+\infty$. This equivalence was proved in \cite{hlp} by means of general arguments relating weighted Tauberian constants with the $L^p(w)$-boundedness of $\M$ for large $p$. Thus a necessary condition for \eqref{e.weightSol} is that $w\in A_\infty$. Theorem~\ref{t.main2} claims that the opposite implication is also true, namely \eqref{e.weightSol} holds whenever $w\in A_\infty$. The remainder of this paper is devoted to the proof of this theorem. We also take the chance to state a quantitative version. The essence of this quantification is that the $A_\infty$-constant of a weight $w$ is, roughly speaking, equivalent to the  smallest number $c>1$ such that the following asymptotic estimate holds:
\[
 \Cw(\alpha)-1 \lesssim (1-\alpha)^\frac{1}{c}\quad\text{as}\quad \alpha\to 1^-.
\]
We make this precise in the following proposition which proves the optimality part of Theorem~\ref{t.main2}.
\begin{proposition}\label{p.wT} Suppose that $w$ is a non-negative, locally integrable function in $\R^n$ such that the following Solyanik estimate holds:
\[
 \Cw(\alpha)-1 \leq B (1-\alpha)^\frac{1}{\beta}\quad\text{whenever}\quad \alpha > 1-e^{-\beta},
\]
for some numerical constants $B,\beta\geq 1$. Then $w\in A_\infty$ and $[w]_{A_\infty}\lesssim_n \beta (1+\log B)$.
\end{proposition}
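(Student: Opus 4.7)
The plan is to reduce the proposition to the characterization of $A_\infty$ provided by Lemma~\ref{l.growth}(ii). That lemma tells us that if
\[
\frac{w(S)}{w(Q)} \leq c_1 \Big(\frac{|S|}{|Q|}\Big)^{1/c_2}
\]
for every cube $Q$ and every measurable $S\subseteq Q$, then $w\in A_\infty$ with $[w]_{A_\infty}\lesssim c_2(1+\log c_1)$. Thus it suffices to extract a power-type $|S|/|Q|$ decay for $w(S)/w(Q)$ directly from the hypothesized Solyanik estimate, with $c_2 \eqsim \beta$ and $c_1 \eqsim B$.

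The key observation is a one-line pointwise bound. Given a cube $Q\subset\R^n$ and measurable $S\subseteq Q$, set $E:=Q\setminus S$. For \emph{every} $x\in Q$, testing the maximal operator against $Q$ itself yields
\[
\M\chi_E(x)\geq \frac{|E\cap Q|}{|Q|}=1-\frac{|S|}{|Q|}.
\]
Consequently, whenever $\alpha<1-|S|/|Q|$ we have the inclusion $Q\subseteq\{\M\chi_E>\alpha\}$, and therefore
\[
w(Q)\leq w\bigl(\{\M\chi_E>\alpha\}\bigr)\leq \Cw(\alpha)\,w(E)=\Cw(\alpha)\,w(Q\setminus S).
\]

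Next I would optimize the choice of $\alpha$. Set $\alpha:=1-2|S|/|Q|$, which strictly satisfies $\alpha<1-|S|/|Q|$. Provided $|S|/|Q|<\tfrac12 e^{-\beta}$, we have $\alpha>1-e^{-\beta}$ and the hypothesized bound applies, giving
\[
w(Q)\leq \Bigl(1+B\bigl(2|S|/|Q|\bigr)^{1/\beta}\Bigr)\,w(Q\setminus S).
\]
Since $w(Q)=w(S)+w(Q\setminus S)\leq w(S)+w(Q)$, rearranging yields
\[
w(S)\leq B\cdot 2^{1/\beta}\,\Big(\frac{|S|}{|Q|}\Big)^{1/\beta}\,w(Q\setminus S)\leq 2B\,\Big(\frac{|S|}{|Q|}\Big)^{1/\beta}\,w(Q).
\]

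For the complementary regime $|S|/|Q|\geq \tfrac12 e^{-\beta}$ the bound is trivial: $w(S)/w(Q)\leq 1$, while $(|S|/|Q|)^{1/\beta}\geq(e^{-\beta}/2)^{1/\beta}\geq e^{-1}/2$, so $w(S)/w(Q)\leq 2e\,(|S|/|Q|)^{1/\beta}$. Combining the two regimes (and using $B\geq 1$) gives
\[
\frac{w(S)}{w(Q)}\leq 2eB\,\Big(\frac{|S|}{|Q|}\Big)^{1/\beta}
\]
uniformly over all cubes $Q$ and measurable $S\subseteq Q$. Applying Lemma~\ref{l.growth}(ii) with $c_1=2eB$ and $c_2=\beta$ concludes $w\in A_\infty$ with $[w]_{A_\infty}\lesssim \beta(1+\log(2eB))\lesssim_n \beta(1+\log B)$.

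There is no real obstacle here; the proof is essentially a direct dualization of the Solyanik hypothesis through the pointwise inclusion $Q\subseteq\{\M\chi_{Q\setminus S}>\alpha\}$. The only slightly delicate point is the calibration of $\alpha$: one must leave a multiplicative gap between $1-\alpha$ and $|S|/|Q|$ (the factor of $2$) so that the quotient $\Cw(\alpha)/1$ in the rearrangement does not blow up, and one must separately dispatch the boundary regime where the Solyanik hypothesis does not formally apply. Both are handled by the choice $\alpha=1-2|S|/|Q|$ together with a trivial bound when $|S|/|Q|$ is not small.
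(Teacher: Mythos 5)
Your proof is correct and follows essentially the same route as the paper's: starting from the pointwise fact that $Q\subseteq\{\M\chi_{Q\setminus S}>\alpha\}$ whenever $\alpha<1-|S|/|Q|$, extracting the decay $w(S)/w(Q)\lesssim B(|S|/|Q|)^{1/\beta}$, handling the regime $|S|/|Q|$ not small by the trivial bound, and invoking Lemma~\ref{l.growth}(ii). The only cosmetic difference is that the paper lets $\alpha\uparrow 1-|S|/|Q|$ to avoid the extra factor of $2$ you pick up by fixing $\alpha=1-2|S|/|Q|$; the final estimate $[w]_{A_\infty}\lesssim\beta(1+\log B)$ is the same.
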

\begin{proof} For a cube $Q\subset \R^n$ and a measurable set $A\subset Q$ with $|A|/|Q|>\alpha$ we have $w(Q) \leq \Cw(\alpha) w(A)$ and taking complements we get
\begin{equation}\label{e.finite}
 w( S) \leq  \frac{\Cw(\alpha)-1}{\Cw(\alpha)} w(Q) \quad\text{whenever}\quad |S|/|Q|<1-\alpha <e^{-\beta}.
\end{equation}
This already implies that $w\in A_\infty$; see Remark~\ref{r.ainf}. To get an estimate for $[w]_{A_\infty}$ observe that since $\Cw(\alpha)\geq 1$ we have
\[
\frac{w(S)}{w(Q)}\leq  \Cw(\alpha)-1 \leq B(1-\alpha)^\frac{1}{\beta }.
\]
Letting $\alpha \to 1-|S|/|Q|$ we get that for all $S\subset Q$ with $|S|/|Q|<e^{-\beta}$ we have
\[
 \frac{w(S)}{w(Q)} \leq B  \big( \frac{ |S|}{|Q|}\big)^\frac{1}{\beta } .
\]
When $|S|/|Q|>e^{-\beta}$ we trivially have
\[
 \frac{w(S)}{w(Q)} \leq e  \big( \frac{ |S|}{|Q|}\big)^\frac{1}{\beta } .
\]
By Lemma~\ref{l.growth} it follows that $[w]_{A_\infty}\lesssim \beta (1+\log B)$.
\end{proof}
\begin{remark}\label{r.ainf} Proposition~\ref{p.wT} above proves that a Solyanik estimate implies that $w\in A_\infty$ and quantifies this implication in terms of the involved constants. Observe also that by \eqref{e.finite} we see that if $\Cw(\alpha)<+\infty$ then there exist constants $\xi,\eta<1$ such that $|S|/|Q|<\xi\Rightarrow w(S)/w(Q)<\eta$. This condition is a well known equivalent characterization of $A_\infty$; see for example \cite{GaRu}*{Corollary IV.2.13}. On the other hand, if we assume that $w\in A_\infty$ then $w\in A_p$ for some $p\in[1,\infty)$ and thus $\M:L^p(w)\to L^{p,\infty}(w)$, where $\M$ is the Hardy-Littlewood maximal operator on $\R^n$. This immediately implies that
\[
 w(\{x\in\R^n:\, \M\chi_E(x)>\alpha\})\lesssim_{w,n,p} {\alpha^{-p}} w(E),\quad \alpha\in(0,1),
\]
for all measurable sets $E\subseteq\R^n$. Thus we get a trivial proof of the fact that
\[
 w \in A_\infty \Leftrightarrow \exists \alpha\in(0,1)\quad\text{such that}\quad \Cw(\alpha)<+\infty.
\]
However, the proof of the same fact from \cite{hlp}, which is substantially more involved, gives more precise quantitative information; see also the proof of Theorem~\ref{t.second}.\qed
\end{remark}

We now move to the direct implication of Theorem~\ref{t.main2} which is the content of the following proposition:
 \begin{proposition}\label{p.weighted} Let $w\in A_\infty$ and $\M$ denote the Hardy-Littlewood maximal operator in $\R^n$, defined with respect to cubes. We have the Solyanik estimate
 	\[
 	\Cw(\alpha)-1\lesssim_n \Delta_w ^2 (1-\alpha)^{(c_n[w]_{A_\infty}) ^{-1}}\quad\text{whenever}\quad \alpha>1-e^{-c_n[w]_{A_\infty}}.
 	\]
Here $\Delta_w$ is the doubling constant of $w$, and $c_n$ and the implied constant depend only upon the dimension $n$.
 \end{proposition}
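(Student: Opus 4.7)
The plan is to imitate the proof of Theorem~\ref{t.Soldoubling} from \S\ref{s.t1}, with the Lebesgue Tauberian condition for $\M$ first converted, cube by cube, into a $w$-Tauberian condition via Lemma~\ref{l.growth}(i). Concretely, given any compact $K \subseteq \{\M\chi_E > \alpha\}$, select a finite collection of cubes $\{Q_j\}$ covering $K$ such that $|E \cap Q_j|/|Q_j| > \alpha$, equivalently $|Q_j \setminus E|/|Q_j| < 1-\alpha$. Apply Lemma~\ref{l.growth}(i) to $S = Q_j \setminus E \subseteq Q_j$ to obtain
\[
\frac{w(Q_j \setminus E)}{w(Q_j)} \leq 2\Big(\frac{|Q_j \setminus E|}{|Q_j|}\Big)^{(c_n[w]_{A_\infty})^{-1}} \leq 2(1-\alpha)^{(c_n[w]_{A_\infty})^{-1}} =: \nu.
\]
The hypothesis $\alpha > 1 - e^{-c_n[w]_{A_\infty}}$ is precisely what guarantees $\nu < 2/e < 1$, so that a parameter $\xi \in (\nu, 1)$ is available for the selection step.

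Next I would feed $\{Q_j\}$ into Lemma~\ref{l.covering} with this parameter $\xi$: one obtains a subcollection $\{\tilde Q_k\}$ satisfying $w(\bigcup_j Q_j) \leq (1 + C_{w,n} \xi^{c_{w,n}}) w(\bigcup_k \tilde Q_k)$ together with the disjointification property $w(\tilde E_k) > \xi w(\tilde Q_k)$ for $\tilde E_k := \tilde Q_k \setminus \bigcup_{j<k} \tilde Q_j$. Repeating verbatim the disjointification calculation from the proof of Theorem~\ref{t.Soldoubling} but with $\nu$ playing the role of $1-\alpha$, the inequality $w(\tilde Q_k \setminus E) \leq \nu\, w(\tilde Q_k) < (\nu/\xi) w(\tilde E_k)$ yields $w(\tilde E_k \cap E) > (1 - \nu/\xi) w(\tilde E_k)$, and the disjointness of the $\tilde E_k$ gives $\sum_k w(\tilde E_k) \leq \xi/(\xi-\nu)\, w(E)$. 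Combining,
\[
w(K) \leq (1 + C_{w,n}\xi^{c_{w,n}}) \frac{\xi}{\xi-\nu}\, w(E).
\]
One then optimizes by choosing $\xi = \nu^{1/(1+c_{w,n})}$ so that the two competing multiplicative errors $\xi^{c_{w,n}}$ and $\nu/(\xi-\nu)$ balance, producing an upper bound of order $\nu^{c_{w,n}/(c_{w,n}+1)}$ on $\Cw(\alpha)-1$, which in terms of $(1-\alpha)$ is an expression of the expected Solyanik type.

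The main obstacle is the bookkeeping of constants to recover precisely the claimed form $\Delta_w^2 (1-\alpha)^{(c_n[w]_{A_\infty})^{-1}}$. Tracing the constant $C_{w,n}$ of Lemma~\ref{l.covering} back through Lemma~\ref{l.collect} and Lemma~\ref{l.doubling} accounts for the $\Delta_w^2$ factor cleanly. The delicate point is the final exponent: the $c_{w,n}$ produced by Lemma~\ref{l.covering} carries an extra $[w]_{A_\infty}$-dependence stemming from Corollary~\ref{c.growth}(ii), so one has to verify that in the optimization step the bookkeeping collapses in such a way that the resulting exponent on $(1-\alpha)$ can be absorbed into $(c_n[w]_{A_\infty})^{-1}$ for a purely dimensional $c_n$. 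This sharp form is consistent with (and optimal up to the $\Delta_w^2$ factor in view of) the converse Proposition~\ref{p.wT}, which provides the corresponding lower bound on the exponent; verifying that no additional $w$-dependent loss sneaks into the exponent during the optimization is where the bulk of the technical work will lie.
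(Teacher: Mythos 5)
The approach does not recover the claimed exponent, and the gap is exactly at the point you flag as the ``delicate point'' --- but it is not a bookkeeping matter that can be collapsed; it is a genuine loss that your route cannot avoid.

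Trace the constant $c_{w,n}$ in Lemma~\ref{l.covering} carefully. Its proof shows that a non-selected cube $Q$ is contained in the $(1+\rho)$-enlargement of a previously selected cube, where $\rho \lesssim_{w,n} \xi^{1/\gamma_{w,n}}$ and $\gamma_{w,n}$ is the exponent from Corollary~\ref{c.growth}(ii). That exponent is $\gamma_{w,n} \eqsim \log_2 \Delta_w$, and by Corollary~\ref{c.growth}(i) one only knows $\Delta_w \lesssim e^{e^{c_n[w]_{A_\infty}}}$, i.e.\ $\gamma_{w,n}$ can be of size $e^{c_n[w]_{A_\infty}}$. Feeding $\rho$ into Lemma~\ref{l.collect} then gives $c_{w,n} = (c_n[w]_{A_\infty}\,\gamma_{w,n})^{-1}$, so your optimized bound $\nu^{c_{w,n}/(1+c_{w,n})}$, with $\nu \eqsim (1-\alpha)^{(c_n[w]_{A_\infty})^{-1}}$, produces an exponent on $(1-\alpha)$ of order $(c_n[w]_{A_\infty})^{-2}\gamma_{w,n}^{-1}$, which can be exponentially smaller than the claimed $(c_n[w]_{A_\infty})^{-1}$. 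In other words, you \emph{compose} the $A_\infty$-absorption estimate of Lemma~\ref{l.growth}(i) twice: once to convert the Lebesgue Tauberian condition into a $w$-one, and a second time inside Lemma~\ref{l.covering}/Lemma~\ref{l.collect}. Each application costs a factor $(c_n[w]_{A_\infty})^{-1}$ in the exponent, and the weighted selection costs an additional $\gamma_{w,n}^{-1}$; these multiply, they do not collapse. This is precisely the degradation the paper warns about in the remark following Proposition~\ref{p.weighted}: reducing to $\M_w$ and invoking Theorem~\ref{t.Soldoubling} gives a Solyanik estimate but not the sharp exponent, and the sharp exponent is exactly what drives Theorem~\ref{t.second}.

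The paper's proof is structured to apply the $A_\infty$ absorption only once along each branch of the argument. The C\'ordoba--Fefferman selection is run with respect to \emph{Lebesgue} measure at level $1-\delta$, which yields the inclusion $\cup_j Q_j \subseteq \cup_j (1+c_n\delta^{1/n})\tilde Q_j$ with a purely dimensional enlargement exponent $1/n$ (no $\gamma_{w,n}$). The set $\{\M\chi_E>\alpha\}$ is then split as $w(E)+I+II$, where $I$ (the annular enlargement) is handled by Lemma~\ref{l.doubling} and $II$ (the leakage into $E^{\mathsf c}$) by a direct application of Lemma~\ref{l.growth}(i) to the disjointified increments; each term acquires exactly one power $(c_n[w]_{A_\infty})^{-1}$, and the final optimization $\delta=(1-\alpha)^{1/2}$ only halves the exponent, which is absorbed into $c_n$. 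To repair your argument you would have to abandon the weighted selection of Lemma~\ref{l.covering} and replace it with the Lebesgue-measure selection, at which point you have essentially reproduced the paper's proof.
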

\begin{proof} As usual, let us consider a compact set $K\subseteq E_\alpha\coloneqq \{ x\in\R^n:\M\chi_E>\alpha\}$ and so $K\subseteq \cup_j Q_j$ for some finite collection of cubes $\mathcal Q\coloneqq \{Q_j\}_{j=1} ^N$ with $|Q_j \cap E|>\alpha |Q_j|$ for all $j$. We assume that $|Q_1|\geq \cdots\geq |Q_N|$ and for $\delta\in(0,1)$ to be chosen later we choose $\{\tilde Q_j\}_{j=1}$ to be a C\'ordoba-Fefferman subcollection at level $1-\delta$. Namely, we set $\tilde Q_1\coloneqq Q_1$ and, assuming $\tilde Q_1, \ldots,\tilde  Q_j\eqqcolon Q_J$ have been selected, we set $\tilde Q_{j+1}$ to be the first cube among the cubes $Q\in \{Q_{J+1},\ldots,Q_N\}$ that satisfy
\[
 \Abs{Q\cap \bigcup_{k\leq j} \tilde Q_j} \leq (1-\delta) |Q|.
\]
Just as in the proof of Lemma~\ref{l.covering} we have
\[
 \bigcup_j Q_j \subseteq \bigcup_j (1+c_n\delta^\frac{1}{n})\tilde Q_j.
\]
The proof is based on the following basic estimate
\[
 w(E_\alpha)\leq w(E)+w(\cup_j (1+c_n\delta^\frac{1}{n})\tilde Q_j \setminus \cup_j\tilde Q_j )+ w(\cup_j \tilde Q_j \cap E^{\mathsf{c}} )\eqqcolon w(E)+I+II.
\]
Now let $\mathcal R$ be the Vitali subcollection of $\{\tilde Q_j\}_j$	 so that $\cup_j \tilde Q_j \subseteq \cup_{R\in\mathcal R} 3R$, the cubes in $\mathcal R$ are pairwise disjoint and each $\tilde Q_j$ intersects some $R\in\mathcal R$ of larger sidelength. We organize the collection $\{\tilde Q_j\}_j$ into satellite configurations; for $R\in\mathcal R$ we let $\mathcal Q_R$ be the cubes from $\{\tilde Q_j\}_j$ that intersect $R$ and have smaller sidelength than the sidelength of $R$. Thus
\[
\bigcup_j \tilde Q_j = \bigcup_{R\in\mathcal R} \bigcup_{Q\in\mathcal Q _R} Q
\]
and each $Q_R$ is a satellite configuration with center $R\in \mathcal R$.

In order to estimate $I$ observe that
\[
 I\leq\sum_{R\in\mathcal R} w\big(\bigcup_{Q\in\mathcal Q_R} (1+c_n\delta^\frac{1}{n}) Q\setminus \bigcup_{Q\in\mathcal Q_R} Q\big)\lesssim_n \Delta_w ^2 \delta^{(c_n[w]_{A_\infty})^{-1}} \sum_{R\in\mathcal R}w(R)
\]
by Lemma~\ref{l.doubling}. Tracing back the definition of any cube $R$ we remember that $|R\cap E^\mathsf{c}|\leq (1-\alpha)|R|$ and thus, by the $A_\infty$ hypothesis on $w$, we get
\[
 w(R\cap E^{\mathsf c}) \leq 2 (1-\alpha)^{ (c_n [w]_{A_\infty} )^ {-1} } w(R)
\]
so that
\begin{equation}\label{e.RcapE}
w(R) \leq \big(1-2(1-\alpha)^{(c_n[w]_{A_\infty})^{-1}}\big)^{-1} w(R\cap E)\quad\text{whenever}\quad \alpha >1-e^{-C_n [w]_{A_\infty}}.
\end{equation}
Here $C_n>1$ is some large dimensional constant. Plugging into the estimate for $I$ we can conclude
\[
 I\lesssim_n \frac{\Delta_w ^2 \delta^{(c_n[w]_{A_\infty})^{-1}} }{ 1-2(1-\alpha)^{(c_n[w]_{A_\infty})^{-1}}} w( E)\quad\text{whenever}\quad \alpha >1-e^{-C_n [w]_{A_\infty}}
\]
since all the $R\in\mathcal R$ are pairwise disjoint.

We now move to the estimate for $II$ using again the organization of the collection $\{\tilde Q_j\}_j$ into satellite configurations. We have
\[
 II \leq \sum_{R\in\mathcal R} w\big(\bigcup_{Q\in\mathcal Q_R} Q \cap E^{\mathsf{c}}\big)\eqqcolon \sum_{R\in\mathcal R} w(A_R).
\]
For each $R$ observe that $A_R=\bigcup_{Q\in\mathcal Q_R} Q \cap E^{\mathsf{c}}\subseteq 3R$ since $\mathcal Q_R$ is a satellite configuration with center $R$ and let us write out the cubes in $\mathcal Q_R$ as $\mathcal Q_R\eqqcolon\{S_0,S_1,\ldots,S_K\}$ where $|S_0|\geq \cdots\geq |S_K|$. We define the increments $E_k$ as usual by setting $E_0\coloneqq S_0=R$ and $E_k\coloneqq S_k\setminus \cup_{\ell<k} S_\ell$ for $k\geq 1$. Observe that since $\mathcal Q_R \subset \{\tilde Q_j\}_j$ we have that $|E_k|>\delta |S_k|$ and $|S_k \cap E|>\alpha|S_k|$ for every $k$. Thus we get the estimate $|E_k\cap E^{\mathsf c}|\leq \delta^{-1}(1-\alpha)|E_k|$ for $\delta\in(1-\alpha,1)$. This allows us to estimate
\[
|A_R|=\Abs{\bigcup_k E_k \cap E^{\mathsf c}}\leq \frac{1-\alpha}{\delta}\Abs{ \bigcup_k E_k}=\frac{1-\alpha}{\delta} \Abs{\bigcup_{Q\in\mathcal Q_R} Q}\;.
\]
Using Lemma~\ref{l.growth} we can conclude that for each $R$ we have
\[
\begin{split}
 \frac{w(A_R)}{w(3R)}& \leq 2 \big(\frac{|A_R|}{|R|}\big)^{(c_n[w]_{A_\infty})^{-1}} \leq 2 \Big(\frac{1-\alpha}{\delta}\frac{\abs{\cup_{Q\in\mathcal Q_R} Q} }{|R|} \Big)^ { (c_n [w]_{A_\infty})^{-1}}
\\
&\lesssim_n \big(\frac{1-\alpha}{\delta} \big)^ { (c_n [w]_{A_\infty})^{-1}}.
\end{split}
\]
Now we can sum for $R\in\mathcal R$ to get
\[
\begin{split}
II & \lesssim_n \sum_{R\in\mathcal R} \big(\frac{1-\alpha}{\delta} \big)^ { (c_n [w]_{A_\infty})^{-1}}w(3R)\lesssim_n \Delta_w ^2 \big(\frac{1-\alpha}{\delta} \big)^ { (c_n [w]_{A_\infty})^{-1}} \sum_{R\in\mathcal R} w(R)
\\
&\leq  \Delta_w ^2 \big(\frac{1-\alpha}{\delta} \big)^ { (c_n [w]_{A_\infty})^{-1}} \big(1-2(1-\alpha)^{(c_n[w]_{A_\infty})^{-1}}\big)^{-1}  \sum_{R\in\mathcal R} w(R\cap E)
\end{split}
\]
whenever $\alpha>1-e^{-C_n[w]_{A_\infty}}$, where in the last estimate we have used \eqref{e.RcapE}. Using the fact that the Vitali cubes $R\in\mathcal R$ are pairwise disjoint we finally get that for $\alpha>1-e^{-C_n[w]_{A_\infty}}$ we have
\[
 II \lesssim_n  \Delta_w ^2 \big(\frac{1-\alpha}{\delta} \big)^ { (c_n [w]_{A_\infty})^{-1}} \big(1-2(1-\alpha)^{(c_n[w]_{A_\infty})^{-1}}\big)^{-1}w(E).
\]
Choosing $\delta\coloneqq (1-\alpha)^\frac{1}{2}>1-\alpha$ and summing up the estimates for $I$ and $II$ we conclude
\[
 w(K\setminus E)\lesssim_n \Delta_w ^2 (1-\alpha)^{(c_n[w]_{A_\infty})^{-1}}\quad\text{whenever}\quad \alpha>1-e^{-C_n[w]_{A_\infty}}.
\]
which easily implies the desired estimate. Note that by taking the maximum of $c_n, C_n$ we can assume that the same dimensional constant appears in both places of the estimate above.
\end{proof}
\begin{remark} If one is not interested in the constants in the statement of Proposition~\ref{p.weighted} then a very easy proof is available. Indeed, one just needs to note that
\[
\{ \M\chi_E>\alpha\}\subseteq \big\{ \M_w(\chi_E)>1-2 (1-\alpha)^{(c_n[w]_{A_\infty} )^{-1}}\big\}
\]
when $\alpha$ is sufficiently close to $1$ and the result follows by Theorem~\ref{t.Soldoubling} since $w\in A_\infty$. Note however that the precise exponent in the statement of Theorem~\ref{t.main2} plays a crucial role in the embedding of $A_\infty$ into $A_p$ below.\qed
\end{remark}
\begin{remark} In one dimension it is possible to get the estimate of Proposition~\ref{p.weighted} above without the term $\Delta_w ^2$, and the proof is elementary. Indeed, Let $K\subseteq E_\alpha\coloneqq\{\M\chi_E>\alpha\}$ where $K$ is compact. Arguing as in~\S\ref{s.easy} we can find finitely many intervals $I_j$ that cover $K$ and such that $\sum_j \chi_{I_j}\leq 2$. Then for $w\in A_\infty$ we have
	\[
	w(K)\leq w(E)+\sum_{j}\frac{w(I_j\cap E^{\mathsf c})}{w(I_j)}w(I_j).
	\]
Using Lemma~\ref{l.growth} and the fact that the $I_j$'s have overlap at most $2$ we get
\[
w(E_\alpha)\leq w(E)+2 (1-\alpha)^{(c[w]_{A_\infty})^{-1}}w(E_\alpha).
\]
Thus in dimension $n=1$ we thus have the improved estimate
\[
\Cw(\alpha)-1 \leq 2(1-\alpha)^{(c[w]_{A_\infty})^{-1}}\quad\text{whenever}\quad \alpha>1-e^{-c[w]_{A_\infty}} .
\]
Note that the $[w]_{\infty}$ constant is the only information needed to write this estimate, in contrast with the higher dimensional case where the doubling constant of $w$ is also needed. We do not know however if the appearance of the doubling constant is just an artifact of the proof. \qed
\end{remark}
The Solyanik estimates given above characterize $A_\infty$ in terms of the constants involved. One could argue that this is a very complicated way to describe $A_\infty$. However, there is an advantage, namely that these estimates imply a quantitative embedding of $A_\infty$ into $A_p$. Furthermore, when one writes down this embedding then the roles of the constants in a Solyanik estimate become more transparent.
\begin{proof}[Proof of Theorem~\ref{t.second}] We assume that $w\in A_\infty$ and thus the estimate
\[
 \Cw(\alpha)-1 \lesssim_n \Delta_w ^2 (1-\alpha)^{(c_n[w]_{A_\infty})^{-1}}\quad\text{whenever}\quad \alpha\geq 1-e^{-c_n[w]_{A_\infty}}
\]
is available, where $c_n$ is some constant depending only on the dimension. Let as set $\alpha_o\coloneqq 1-e^{-c_n[w]_{A_\infty}}$ so in particular we have that $\Cw(\alpha_o)\leq1+C_n\Delta_w ^2\lesssim_n \Delta_w ^2$. Here we remember that, by (i) of Corollary~\ref{c.growth} we have $\Delta_w\lesssim \exp(\exp(c_n[w]_{A_\infty}))$ for some dimensional constant $c_n>0$. Perusing the proof of \cite{hlp}*{Theorem 6.1} one then sees that:	
\[
w(\{\M\chi_E>\lambda\})\leq \exp{\bigg[\log \Cw(\alpha_o) \bigg( \bigg\lceil \frac{-\log\frac{\alpha_o}{\lambda}}{\log \alpha_o}  \bigg \rceil \bigg\lceil 2+ \frac{\log^+ (2^n\alpha_o)}{\log 1/\alpha_o} \bigg\rceil +1 \bigg)\bigg]}w(E),
\]
where $\lceil x\rceil$ denotes the smallest positive integer which is no less than $x$. Assuming as we may that $\alpha_o 2^n>1$ and using the estimate $\lceil x\rceil \leq x+1$ we get
\[
w(\{x\in\R^n:\M\chi_E>\lambda)\})\leq \Cw(\alpha) \frac{w(E)}{\lambda^{p_o}}
\]
for $p_o=e^{C_n[w]_{A_\infty}} \log \Cw(\alpha_o)$, where $C_n>1$ is a dimensional constant. That is, $\M$ is of restricted weak type $(p_o,p_o)$ with respect to $w$. Marcinkiewicz interpolation now gives that $\M$ is bounded on $L^p(w)$ for $p>e^{C_n[w]_{A_\infty} }$ and we have the norm estimate
\[
\|\M\|_{L^p(w)\to L^p(w)} \leq 2 \frac{p^\frac{1}{p}\Cw(\alpha_o) ^{\frac{1}{p} e^{C_n[w]_{A_\infty}}}} {(p-e^{C_n[w]_{A_\infty} })^\frac{1}{p}}.
\]
Observe that, setting $q = 2 e^{C_n[w]_{A_\infty}}$, we have
  \[
\|\M\|_{L^q(w)\to L^q(w)} \leq 4 (\Cw(\alpha_o) )^\frac{1}{2}.
\]
By the Riesz-Thorin theorem, applied to any linearization of $\M$, we then have that
\[
\|\M\|_{L^p(w)\to L^p(w)} \leq 4 ^\frac{q}{p}( \Cw(\alpha_o))^{q/2p}
\]
for $p > q$. Since we always have $\|\M\|_{L^p(w)\to L^p(w)} \geq [w]_{A_p} ^\frac{1}{p}$ for all $p>1$ we get that for some dimensional constant $c_n>1$ we have
\[[w]_{A_p} \leq  (16\Cw(\alpha_o)) ^{e^{c_n[w]_{A_\infty}}} \leq  e^{e^{c_{n}[w]_{{A}_\infty}}}\]
for $p > e^{c_n[w]_{A_\infty}}$ as we wanted.
\end{proof}
Recall here that the Hru{\v{s}}{\v{c}}ev $A_\infty$ constant is defined by
\[
[w]_{A_\infty} ' \coloneqq \sup_Q \bigg(\frac{1}{|Q|}\int_Q w\bigg)  \exp\bigg(\frac{1}{|Q|}\int_Q \log w^{-1} \bigg)
\]
and we have $w\in A_\infty \Leftrightarrow [w]_{A_\infty}' <+\infty$. Furthermore it is known that $[w]_{A_\infty}\lesssim_n [w]_{A_\infty}'$; see for example \cite{HytP}. Remember here that the $[w]_{A_\infty} '$ is the natural endpoint constant which can be defined by taking the formal limit, as $p\to \infty$, of the constants $[w]_{A_p}$. Theorem~\ref{t.second} now immediately implies:

\begin{corollary}\label{c.constants} Let $[w]_{A_\infty} '$ be the Hru{\v{s}}{\v{c}}ev constant, defined above, and $[w]_{A_\infty}$ denote the Fujii-Wilson constant as usual. We have
	\[
	[w]_{A_\infty} \lesssim_n [w]_{A_\infty}' \leq e^{e^{c_n[w]_{A_\infty}}}
	\]
for some dimensional constant $c_n>1$.
\end{corollary}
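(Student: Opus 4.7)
The plan is to deduce the corollary directly from Theorem~\ref{t.second}. The first inequality $[w]_{A_\infty}\lesssim_n [w]_{A_\infty}'$ is standard and already attributed to \cite{HytP} in the paper, so I focus on the second inequality $[w]_{A_\infty}'\leq e^{e^{c_n[w]_{A_\infty}}}$.

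The key observation is that the Hru\v{s}\v{c}ev constant arises as the natural $p\to\infty$ endpoint of the $A_p$ characteristics, and in particular it is dominated by each of them. More precisely, Jensen's inequality applied to the convex function $x\mapsto e^x$ (equivalently, the standard fact that $L^t$-averages of a positive function decrease to the geometric mean as $t\to 0^+$) gives
\[
\exp\left(\frac{1}{|Q|}\int_Q \log w^{-1}\right)\leq \left(\frac{1}{|Q|}\int_Q w^{-\frac{1}{p-1}}\right)^{p-1}
\]
for every cube $Q\subset\R^n$ and every $p>1$. Multiplying through by the local average $\frac{1}{|Q|}\int_Q w$ and taking suprema over cubes yields the pointwise comparison $[w]_{A_\infty}'\leq [w]_{A_p}$ for every $p$ such that $w\in A_p$.

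The final step is to plug in Theorem~\ref{t.second}: for any $p>e^{c_n[w]_{A_\infty}}$ we have $w\in A_p$ with $[w]_{A_p}\leq e^{e^{c_n[w]_{A_\infty}}}$. Fixing such a $p$ and combining with the Jensen-type comparison above gives $[w]_{A_\infty}'\leq e^{e^{c_n[w]_{A_\infty}}}$, which together with $[w]_{A_\infty}\lesssim_n [w]_{A_\infty}'$ yields the chain of inequalities in the statement. No genuine obstacle arises; the corollary is essentially a one-line bookkeeping consequence of Theorem~\ref{t.second}, with Jensen's inequality providing the only bridge between the $A_p$ and the Hru\v{s}\v{c}ev characteristic.
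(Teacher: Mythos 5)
Your proof is correct and matches the paper's intent exactly: the paper states that Corollary~\ref{c.constants} ``immediately'' follows from Theorem~\ref{t.second}, and the implicit step you spell out — the Jensen comparison $[w]_{A_\infty}'\leq [w]_{A_p}$ for every $p>1$, combined with the bound $[w]_{A_p}\leq e^{e^{c_n[w]_{A_\infty}}}$ for $p>e^{c_n[w]_{A_\infty}}$ — is precisely the bookkeeping the authors leave to the reader. There is no gap and no divergence from the paper's route.
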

We note that it was shown in \cite{HytP} that there are examples of weights such that $[w]_{A_\infty} '$ is exponentially larger than $[w]_{A_\infty}$.  Moreover, as was shown in \cite{BR}, the doubly exponential estimate of $[w]_{A_\infty '}$ in terms of $[w]_{A_\infty}$ is sharp, up to dimensional constants. See in particular \cite{BR}*{Theorem 1.3} and note that the the \emph{reverse H\"older constant} $[w]_{\mathsf{RH}_1}$ in that paper satisfies $[w]_{\mathsf{RH}_1}\eqsim_n [w]_{A_\infty}$.

 We can also observe that the doubling constant of $w$ is known to satisfy
\[
\Delta_w \lesssim_n ([w]_{A_\infty}  ' )^{2n}.
\]
This estimate is proved in \cite{Kor}. On the other hand the only estimate for the doubling constant of $w$ with respect to $[w]_{A_\infty}$ that we are aware of is doubly exponential in $[w]_{A_\infty}$:
\[
\Delta_w \leq e ^{e^{c_n[w]_{A_\infty}}}.
\]
These estimates are consistent with the estimate of Corollary~\ref{c.constants}.


\section*{Acknowledgments} Part of this work was carried out while the authors were visiting the Department of Mathematical Analysis at the University of Seville. We are indebted to Teresa Luque and Carlos P\'erez for their warm hospitality. We wish to  thank Teresa Luque and Alex Stokolos for pointing out some important references related to the subject of this paper.   Finally, we would like to thank the referee for an expert reading that resulted in many improvements throughout the paper.

\begin{bibsection}
\begin{biblist}
	
\bib{BH}{article}{
	Author = {O. V. Beznosova},
	Author = {P. A. Hagelstein},
	Title = {Continuity of halo functions associated to homothecy invariant density bases},
	Url = {http://arxiv.org/abs/1212.4199},
    journal={Colloquium Mathematicum},
    volume={134},
    date={2014},
    number={2},
    pages={235--243}
    }

\bib{BR}{article}{
   author={Beznosova, O.},
   author={Reznikov, A.},
   title={Sharp estimates involving $A_\infty$ and $L\log L$ constants,
   and their applications to PDE},
   journal={Algebra i Analiz},
   volume={26},
   date={2014},
   number={1},
   pages={40--67},
   issn={0234-0852},
   review={\MR{3234812}},
}

\bib{CLMP}{article}{
   author={Cabrelli, Carlos},
   author={Lacey, Michael T.},
   author={Molter, Ursula},
   author={Pipher, Jill C.},
   title={Variations on the theme of Journ\'e's lemma},
   journal={Houston J. Math.},
   volume={32},
   date={2006},
   number={3},
   pages={833--861},
   issn={0362-1588},
   review={\MR{2247912 (2007e:42011)}},
}

\bib{CF}{article}{
   author={C\'ordoba, A.},
   author={Fefferman, R.},
   title={A geometric proof of the strong maximal theorem},
   journal={Ann. of Math.},
   volume={102},
   date={1975},
   number={1},
   pages={95--100},
   issn={0027-8424},
   review={\MR{0379785 (52 \#690)}},
}

\bib{cf77}{article}{
    author={C\'ordoba, A.},
   author={Fefferman, R.},
   title={On the equivalence between the boundedness of maximal and multiplier operators in Fourier analysis},
   journal={Proc. Natl. Acad. Sci. USA},
   volume={74},
   date={1977},
   number={2},
   pages={423--425},
   issn={0027-8424},
   review={\MR{0433117 (55 \#6096))}},
}
\bib{DW}{article}{
   author={Dindo{\v{s}}, Martin},
   author={Wall, Treven},
   title={The sharp $A_p$ constant for weights in a reverse-H\"older
   class},
   journal={Rev. Mat. Iberoam.},
   volume={25},
   date={2009},
   number={2},
   pages={559--594},
   issn={0213-2230},
   review={\MR{2569547 (2011b:42041)}},
}

\bib{DMRO}{article}{
   author={Duoandikoetxea, Javier},
   author={Mart{\'{\i}}n-Reyes, Francisco J.},
   author={Ombrosi, Sheldy},
   title={Calder\'on weights as Muckenhoupt weights},
   journal={Indiana Univ. Math. J.},
   volume={62},
   date={2013},
   number={3},
   pages={891--910},
   issn={0022-2518},
   review={\MR{3164849}},
}

\bib{Fu}{article}{
   author={Fujii, Nobuhiko},
   title={Weighted bounded mean oscillation and singular integrals},
   journal={Math. Japon.},
   volume={22},
   date={1977/78},
   number={5},
   pages={529--534},
   issn={0025-5513},
   review={\MR{0481968 (58 \#2058)}},
}

\bib{FL}{article}{
   author={F{\"u}redi, Zolt{\'a}n},
   author={Loeb, Peter A.},
   title={On the best constant for the Besicovitch covering theorem},
   journal={Proc. Amer. Math. Soc.},
   volume={121},
   date={1994},
   number={4},
   pages={1063--1073},
   issn={0002-9939},
   review={\MR{1249875 (95b:28003)}},
}

\bib{GaRu}{book}{
   author={Garc{\'{\i}}a-Cuerva, Jos{\'e}}*{inverted={yes}},
   author={Rubio de Francia, Jos{\'e} L.}*{inverted={yes}},
   title={Weighted norm inequalities and related topics},
   series={North-Holland Mathematics Studies},
   volume={116},
   note={Notas de Matem\'atica [Mathematical Notes], 104},
   publisher={North-Holland Publishing Co.},
   place={Amsterdam},
   date={1985},
   pages={x+604},
   isbn={0-444-87804-1},
   review={\MR{807149 (87d:42023)}},
}

\bib{Gar}{book}{
   author={Garnett, J. B.},
   title={Bounded analytic functions},
   series={Graduate Texts in Mathematics},
   volume={236},
   edition={1},
   publisher={Springer},
   place={New York},
   date={2007},
   pages={xiv+459},
   isbn={978-0-387-33621-3},
   isbn={0-387-33621-4},
   review={\MR{2261424 (2007e:30049)}},
}

\bib{Guz}{article}{
   author={de Guzm{\'a}n, M.},
   title={Differentiation of integrals in ${\bf R}^{n}$},
   conference={
      title={Measure theory},
      address={Proc. Conf., Oberwolfach},
      date={1975},
   },
   book={
      publisher={Springer},
      place={Berlin},
   },
   date={1976},
   pages={181--185. Lecture Notes in Math., Vol. 541},
   review={\MR{0476978 (57 \#16523)}},
}

\bib{hlp}{article}{
		Author = {Hagelstein, P. A.},
		Author = {Luque, T.},
		Author = {Parissis, I.},
		Eprint = {1304.1015},
		Title = {Tauberian conditions, Muckenhoupt weights, and differentiation properties of weighted bases},
		Url = {http://arxiv.org/abs/1304.1015},
		journal={Trans. Amer. Math. Soc.},
		Year = {to appear}}

\bib{HP}{article}{
  author = {Hagelstein, Paul},
  author = {Parissis, Ioannis},
  title = {Solyanik Estimates in Harmonic Analysis},
conference={
      title={Special Functions, Partial Differential Equations, and Harmonic Analysis},
   },  
  date = {2014},
   book={
      series={Springer Proc. Math. Stat.},
      volume={108},
      publisher={Springer, Heidelberg},
   },
  journal = {Springer Proceedings in Mathematics \& Statistics},
  pages = {87--103},
}

\bib{hs}{article}{
   author={Hagelstein, P. A.},
   author={Stokolos, A.},
   title={Tauberian conditions for geometric maximal operators},
   journal={Trans. Amer. Math. Soc.},
   volume={361},
   date={2009},
   number={6},
   pages={3031--3040},
   issn={0002-9947},
   review={\MR{2485416 (2010b:42023)}},
}

\bib{Hru}{article}{
   author={Hru{\v{s}}{\v{c}}ev, Sergei V.},
   title={A description of weights satisfying the $A_{\infty }$ condition
   of Muckenhoupt},
   journal={Proc. Amer. Math. Soc.},
   volume={90},
   date={1984},
   number={2},
   pages={253--257},
   issn={0002-9939},
   review={\MR{727244 (85k:42049)}},
}

\bib{HytP}{article}{
   author={Hyt{\"o}nen, Tuomas},
   author={P{\'e}rez, Carlos},
   title={Sharp weighted bounds involving $A_\infty$},
   journal={Anal. PDE},
   volume={6},
   date={2013},
   number={4},
   pages={777--818},
   issn={2157-5045},
   review={\MR{3092729}},
}

\bib{HytPR}{article}{
   author={Hyt{\"o}nen, Tuomas},
   author={P{\'e}rez, Carlos},
   author={Rela, Ezequiel},
   title={Sharp reverse H\"older property for $A_\infty$ weights on
   spaces of homogeneous type},
   journal={J. Funct. Anal.},
   volume={263},
   date={2012},
   number={12},
   pages={3883--3899},
   issn={0022-1236},
   review={\MR{2990061}}
}

\bib{Kor}{article}{
   author={Korey, Michael Brian},
   title={Ideal weights: asymptotically optimal versions of doubling,
   absolute continuity, and bounded mean oscillation},
   journal={J. Fourier Anal. Appl.},
   volume={4},
   date={1998},
   number={4-5},
   pages={491--519},
   issn={1069-5869},
   review={\MR{1658636 (99m:42032)}},
}

\bib{LM}{article}{
   author={Lerner, Andrei K.},
   author={Moen, Kabe},
   title={Mixed $A_p$-$A_\infty$ estimates with one supremum},
   journal={Studia Math.},
   volume={219},
   date={2013},
   number={3},
   pages={247--267},
   issn={0039-3223},
   review={\MR{3145553}},
}

\bib{M}{article}{
   author={Mitsis, Themis},
   title={Embedding $B_\infty$ into Muckenhoupt classes},
   journal={Proc. Amer. Math. Soc.},
   volume={133},
   date={2005},
   number={4},
   pages={1057--1061 (electronic)},
   issn={0002-9939},
   review={\MR{2117206 (2005i:42031)}},
}

\bib{P}{book}{
   author={Politis, Anastasios},
   title={Sharp results on the relation between weight spaces and BMO},
   note={Thesis (Ph.D.)--The University of Chicago},
   publisher={ProQuest LLC, Ann Arbor, MI},
   date={1995},
   pages={41},
   review={\MR{2716561}},
}

\bib{Rud}{book}{
    author={Rudin, Walter},
   title={Functional Analysis},
   publisher={McGraw-Hill},
   date={1973},
   pages={xiii+397},
   review={\MR{0365062 (51 \#1315)}},
}

\bib{Solyanik}{article}{
   author={Solyanik, A. A.},
   title={On halo functions for differentiation bases},
   language={Russian, with Russian summary},
   journal={Mat. Zametki},
   volume={54},
   date={1993},
   number={6},
   pages={82--89, 160},
   issn={0025-567X},
   translation={
      journal={Math. Notes},
      volume={54},
      date={1993},
      number={5-6},
      pages={1241--1245 (1994)},
      issn={0001-4346},
   },
   review={\MR{1268374 (95g:42033)}},
}

\bib{Wik}{article}{
   author={Wik, Ingemar},
   title={On Muckenhoupt's classes of weight functions},
   journal={Studia Math.},
   volume={94},
   date={1989},
   number={3},
   pages={245--255},
   issn={0039-3223},
   review={\MR{1019792 (90j:42029)}},
}

\bib{W1}{article}{
   author={Wilson, J. Michael},
   title={Weighted inequalities for the dyadic square function without
   dyadic $A_\infty$},
   journal={Duke Math. J.},
   volume={55},
   date={1987},
   number={1},
   pages={19--50},
   issn={0012-7094},
   review={\MR{883661 (88d:42034)}},
}

\bib{W2}{book}{
   author={Wilson, Michael},
   title={Weighted Littlewood-Paley theory and exponential-square
   integrability},
   series={Lecture Notes in Mathematics},
   volume={1924},
   publisher={Springer, Berlin},
   date={2008},
   pages={xiv+224},
   isbn={978-3-540-74582-2},
   review={\MR{2359017 (2008m:42034)}},
}

\end{biblist}
\end{bibsection}
\end{document}